\journal{Journal of Mathematical Analysis and Applications}
\def\R{\hbox{{\rm I}\kern - 0.2em{\rm R}\kern 0.2em}}
\def\N{\hbox{{\rm I}\kern - 0.2em{\rm N}\kern 0.2em}}
\newcommand{\BlackBox}{\rule{1.5ex}{1.5ex}}  
\newenvironment{proof}{\par\noindent{\em Proof.\ }}{\hfill\BlackBox\\[2mm]}
\newtheorem{eg}{Example}[section]
\newtheorem{thrm}[eg]{Theorem}
\newtheorem{lem}{Lemma}
\newtheorem{prop}{Proposition}
\def \eqref#1{(\ref{#1})}
\def \Arctan{\mathop{\rm Arctan}\nolimits}
\newcommand{\dss}{\displaystyle}
\def\lgt{\stackrel{<}{\scriptstyle >}}
\def\glt{\stackrel{>}{\scriptstyle <}}
\begin{document}
\begin{frontmatter}
\title{Stability in a scalar differential equation with\\ multiple, distributed time delays}
\author{Sue Ann Campbell\corref{cor1}} 
\ead{sacampbell@uwaterloo.ca}
\address{Department of Applied Mathematics and Centre for Theoretical Neuroscience,\\
University of Waterloo, Waterloo, Ontario, N2L 3G1, Canada}
\cortext[cor1]{Corresponding author}
\author{Israel Ncube\corref{cor2}}
\ead{Ncube.Israel@gmail.com}
\address{
College of Engineering, Technology $\&$ Physical Sciences,
Department of Mathematics,\\
Alabama A $\&$ M University,
4900 Meridian Street N.,
Huntsville, AL 35762, U.S.A.}

\begin{abstract}
We consider a linear scalar delay differential equation (DDE), consisting of two 
arbitrary distributed time delays. We formulate necessary conditions for stability 
of the trivial solution which are independent of the distributions. For the case of
one discrete and one gamma distributed delay, we give an explicit description of
the region of stabiltiy of the trivial solution and discuss how this depends on
the model parameters.
\end{abstract}
\begin{keyword}
delay differential equations \sep distributed delay \sep stability boundaries
\MSC 34K20 
\end{keyword}

\end{frontmatter}

\section{Introduction}\label{model_description}
Distributed time delays arise in models for a variety of applications including
population dynamics \cite{Cush_1,Far_Ol,KCP,Kuang93}, blood cell dynamics \cite{BBM,YuanBelair}, neuronal models \cite{JessopCampbell,TSE}, and coupled oscillators \cite{Atay03a,Atay03b}.
Although many of these models include only a single time delay, this often results from some
simplification in the model set up. As an example, it is common to assume that all of the time delays are identical
\cite{JessopCampbell} or to neglect relatively smaller time delays \cite{KCP}. The stability of equilibria in models with two discrete time delays has been studied extensively \cite{Bela_Camp_1, Hale_Huang_1, Mah_etal_1, Yuan_Campbell04}.
It has been shown that the presence of two time delays can lead to phenomena such as
stability switching and the existence of codimension two bifurcation points
\cite{Bela_Camp_1, Yuan_Campbell04}. In this article, we investigate such phenomena in a model with two distributed time delays. We focus our attention on the following scalar delay differential equation (DDE) with a linear decay:
\begin{equation}
\dot x(t)= -kx(t)+\alpha\int_0^\infty x(t-\tau)f_\alpha(\tau)\,d\tau +\beta\int_0^\infty x(t-\tau)f_\beta(\tau)\,d\tau\;,
\label{main_eqn}
\end{equation}
where $k,\;\alpha,\; \beta$ are real numbers, $f_\alpha(T)$ and $f_\beta(T)$
are {\em arbitrary} distributions, satisfying
\[
\int_{0}^{\infty}f_{\alpha}(s)ds=1=\int_{0}^{\infty}f_{\beta}(s)ds\;.
\]

We note that \eqref{main_eqn} is a delay differential equation with infinite delay. 
Thus the appropriate phase space is
$C_{0,\rho}((-\infty,0],\R)$ where $\rho$ is a positive constant \cite{Diek_Gyll,Hino,Kol_Mys_2}.
This is the Banach space of functions $\psi:(-\infty,0]\rightarrow\R$ such that
$e^{\,\rho\,\theta}\psi(\theta)$ is continuous and 
\[\lim_{\theta\rightarrow\infty} e^{\,\rho\,\theta}\psi(\theta)=0,\] with norm 
$\|\psi\|_{\infty,\rho}=\sup_{\theta\le 0} e^{\,\rho\,\theta}\psi(\theta)$.  In this space, 
we need the following additional conditions on the distributions
\[
\int_{0}^{\infty}e^{\,\rho\, s}f_{\alpha}(s)\,ds<\infty,\quad 
\int_{0}^{\infty}e^{\,\rho\, s}f_{\beta}(s)\,ds<\infty.
\]
Since equation
\eqref{main_eqn} is linear it will have a unique solution for any initial function 
$\phi\in C_{0,\rho}((-\infty,0],\R)$ \cite{Diek_Gyll,Hino,Kol_Mys_2}. 

Stability with general distributions has been studies by some authors, but generally
only with a single delay \cite{Arino_Vdd06,BBM,Ruan06,WXR,WXW}. 
In particular we note the work of Anderson \cite{And91,And92} which studies
stability properties of linear, scalar differential equation with a single distributed
time delay, in terms of the moments of the distribution.  

Stability in the presence of multiple distributed delays has been studied in some models, 
generally by fixing the distributions to some specific form \cite{And93,incube2013a,incube2013b,LWW,RF}.  An exception is the work of Faria and Oliveira \cite{Far_Ol} which studies the global 
stability of equilibria in a class of 
Lotka-Volterra models with multiple distributed delays having finite maximum delay.  They give 
conditions on the interaction coefficients of the system which guarantee asymptotic
stability for any distribution.

Various specific time delay kernels have been used in the literature. The two most commonly used ones are the {\em weak} and the {\em strong} kernels (gamma distributions), given by $f(s)=re^{-rs}$ and $f(s)=r^2 se^{-rs}$, with $r>0$, respectively. It is well-known (see \cite{MacD_1} and \cite{MacD_2}, for instance) that the average time delays associated with the weak and the strong delay kernels are given by $T=\frac{1}{r}$ and $T=\frac{2}{r}$, respectively. Equation \eqref{main_eqn} would occur in the linearisation about an equilibrium point for the models of \cite{Bela_Camp_1,Hale_Huang_1} and \cite{Yuan_Campbell04,incube2013b} if the discrete delays were replaced by distributed delays.

Making the change of variables $\widetilde{x}=x,\;\widetilde{t}=kt$, and
defining new parameters by $\widetilde{\alpha}=\frac{\alpha}{k}$,
$\widetilde{\beta}=\frac{\beta}{k}$, $T=k\tau$, and new distributions
\[g_\alpha(T)=\frac{1}{k}f_\alpha\left(\frac{T}{k}\right),\quad g_\beta(T)=\frac{1}{k}f_\beta\left(\frac{T}{k}\right),\]
we rescale \eqref{main_eqn} to get
\begin{equation}
\dot x(t)= -x(t)+\alpha\int_0^\infty x(t-T)g_\alpha(T)\,dT +\beta\int_0^\infty x(t-T)g_\beta(T)\,dT\;,
\label{scaled_eqn}
\end{equation}
where, for notational tractability, we have dropped the tilde's.  

In this paper, we will investigate the stability of the trivial solution of \eqref{scaled_eqn} by adopting direct analysis of the associated characteristic equation. The paper is organised as follows. In Section~\ref{delay-independent-stability}, we formulate some necessary distribution-independent conditions for stability of the trivial solution. In Section~\ref{generation-mechanism}, we describe some distribution-specific mechanisms by which bifurcation curves evolve in an appropriate parameter space, and how this has an effect on the region of stability. Section~\ref{number-of-roots-with-real-part} discusses how stability changes as bifurcation curves are traversed in the parameter space characterised by $\beta$ and
the mean delay of $g_\beta$.

\section{Distribution-independent stability}\label{delay-independent-stability}
In equation \eqref{scaled_eqn}, we make the ansatz that $x(t) \simeq c e^{\lambda t}\;,\;c\in\mathbb{R}\;,\;\lambda\in\mathbb{C}$, to obtain the associated characteristic equation, which is given by
\begin{equation}
D(\lambda):=\lambda + 1 -\alpha\int_0^\infty e^{-\lambda T}g_\alpha(T)\,dT
-\beta\int_0^\infty e^{-\lambda T}g_\beta(T)\,dT=0.
\label{char_eq}
\end{equation}
It is well-known \cite{Diek_Gyll} that the trivial solution of \eqref{scaled_eqn}
will be asymptotically stable if all the roots of the characteristic equation
have negative real parts and unstable if at least one root has a positive real
part. In this section we focus on deriving conditions for stability and instability 
which do not depend on the particular distributions that occur in the
equation.

\begin{thrm}\label{instabthm}
If $\alpha+\beta>1$, then the trivial solution of \eqref{scaled_eqn}
is unstable.
\end{thrm}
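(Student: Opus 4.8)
The plan is to exhibit a real, positive root of the characteristic equation \eqref{char_eq}, which immediately forces instability by the root-location criterion quoted just before the theorem. The key observation is that the restriction of $D$ to the real axis is a real-valued function whose sign can be controlled at two points.

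First I would examine $D$ at $\lambda = 0$. Using the normalisation $\int_0^\infty g_\alpha(T)\,dT = \int_0^\infty g_\beta(T)\,dT = 1$, the integral terms in \eqref{char_eq} each collapse to unity, giving
\[
D(0) = 1 - \alpha - \beta.
\]
Under the hypothesis $\alpha + \beta > 1$ this yields $D(0) < 0$. Next I would study the behaviour of $D(\lambda)$ as $\lambda \to +\infty$ along the real axis. For real $\lambda > 0$ and $T \ge 0$ we have $0 \le e^{-\lambda T} \le 1$, so each integrand is dominated by the integrable density $g_\alpha$ (respectively $g_\beta$) and tends pointwise to $0$; dominated convergence then gives $\int_0^\infty e^{-\lambda T} g_\alpha(T)\,dT \to 0$ and likewise for $g_\beta$. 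Hence $D(\lambda) = \lambda + 1 + o(1) \to +\infty$ as $\lambda \to +\infty$.

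Having established $D(0) < 0$ and $D(\lambda) \to +\infty$, I would invoke the intermediate value theorem. Since $D$ is continuous on $[0,\infty)$ as a function of the real variable $\lambda$ (the integrals depend continuously on $\lambda$, again by dominated convergence and the exponential-moment bounds imposed on the distributions), there must exist $\lambda^\ast > 0$ with $D(\lambda^\ast) = 0$. This is a root of the characteristic equation with strictly positive real part, so the trivial solution of \eqref{scaled_eqn} is unstable.

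The only genuinely delicate point is the analytic justification that the two integrals are well-defined, continuous, and vanishing as $\lambda \to +\infty$; this is where the exponential-moment conditions on $g_\alpha$ and $g_\beta$ (inherited from the phase-space requirements) are used, guaranteeing convergence of the integrals for all $\lambda$ in a right half-line and legitimising the dominated-convergence steps. Everything else is an elementary sign analysis, so I do not expect a substantive obstacle beyond setting up this continuity argument cleanly.
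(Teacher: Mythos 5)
Your proposal is correct and follows essentially the same route as the paper: both arguments evaluate $D(0)=1-(\alpha+\beta)<0$, show that $D(\lambda)\to+\infty$ along the positive real axis (the paper via the bound $D(\lambda)\ge\lambda+1-|\alpha|-|\beta|$, you via dominated convergence), and conclude by the intermediate value theorem that a positive real root exists. Your extra care about continuity and convergence of the integrals is a welcome refinement but does not change the substance of the argument.
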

\begin{proof}
Assume that $\lambda$ is a real root of \eqref{char_eq}. Then we have
\begin{eqnarray*}
D(\lambda)&=&\lambda + 1 -\alpha\int_0^\infty e^{-\lambda T}g_\alpha(T)\,dT-\beta\int_0^\infty e^{-\lambda T}g_\beta(T)\,dT \\
&\ge & \lambda + 1 - |\alpha|\int_0^\infty g_\alpha(T)\,dT - |\beta|\int_0^\infty g_\beta(T)\,dT \\
&\ge &\lambda + 1 -|\alpha| -|\beta|.
\end{eqnarray*}
Consequently, for $\lambda$ real and sufficiently large, we conclude that $D(\lambda)>0$. Furthermore, we note that
\[D(0)=1 -\alpha\int_0^\infty g_\alpha(T)\,dT -\beta\int_0^\infty g_\beta(T)\,dT
=1-(\alpha+\beta) <0.
\]
Thus, since $D(\lambda)$ is continuous, we conclude that it has a root with positive real part.
\end{proof}

Let $\tau_{\alpha}$ and $\tau_{\beta}$ be the mean delays of $g_\alpha$ and $g_\beta$, respectively. That is,

\[ \tau_{\alpha}= \int_0^\infty T g_\alpha(T)\,dT,\qquad
\tau_{\beta}= \int_0^\infty T g_\beta(T)\,dT. \]
\begin{thrm} \label{rouche}
Assume that $D(\lambda)$ is analytic in $Re(\lambda)>-d$ for some $d>0$. The
trivial solution of \eqref{scaled_eqn} is asymptotically stable
if $|\alpha|+|\beta|< 1$.
\end{thrm}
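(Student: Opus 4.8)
The plan is to show that the hypothesis $|\alpha|+|\beta|<1$ forces every zero of the characteristic function $D(\lambda)$ in \eqref{char_eq} to lie strictly to the left of the imaginary axis, so that the stability criterion quoted just after \eqref{char_eq} delivers asymptotic stability. As the label of the theorem suggests, the natural instrument is Rouch\'e's theorem, applied after writing $D$ as a small perturbation of $\lambda+1$: set $f(\lambda)=\lambda+1$ and $D(\lambda)=f(\lambda)+g(\lambda)$, where $g(\lambda)=-\alpha\int_0^\infty e^{-\lambda T}g_\alpha(T)\,dT-\beta\int_0^\infty e^{-\lambda T}g_\beta(T)\,dT$. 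The analyticity hypothesis on $D$ in $Re(\lambda)>-d$ is exactly what makes $f$ and $g$ holomorphic on the contours I will use.

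First I would estimate $g$ on the closed right half-plane. For $Re(\lambda)=x\ge 0$ one has $|e^{-\lambda T}|=e^{-xT}\le 1$ on $T\ge 0$, so, using the normalisations $\int_0^\infty g_\alpha=\int_0^\infty g_\beta=1$,
\[
|g(\lambda)|\le |\alpha|\int_0^\infty e^{-xT}g_\alpha(T)\,dT+|\beta|\int_0^\infty e^{-xT}g_\beta(T)\,dT\le |\alpha|+|\beta|<1 .
\]
On the other hand $|f(\lambda)|=|\lambda+1|\ge Re(\lambda)+1\ge 1$ throughout $Re(\lambda)\ge 0$. This yields the strict inequality $|g(\lambda)|<|f(\lambda)|$ on the whole half-plane, which is the engine of the argument.

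Next I would run Rouch\'e on the boundary of the half-disk $\{|\lambda|\le R,\ Re(\lambda)\ge 0\}$. On the imaginary-axis segment the inequality $|g|<|f|$ of the previous step applies verbatim; on the semicircular arc $|f|=|\lambda+1|$ grows like $R$, while $|g|$ stays bounded by $|\alpha|+|\beta|<1$, so $|g|<|f|$ there once $R$ is large. Hence $D=f+g$ and $f=\lambda+1$ have the same number of zeros inside the half-disk, namely none (the single zero $\lambda=-1$ lies outside). Letting $R\to\infty$ shows $D$ has no zeros with $Re(\lambda)\ge 0$.

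It then remains to upgrade ``no unstable roots'' to genuine asymptotic stability, and this is the step I expect to be the main obstacle, since \eqref{scaled_eqn} has infinite delay: one must know that the \emph{entire} spectrum is bounded away from the imaginary axis, not merely that each individual root avoids $Re(\lambda)\ge 0$. I would handle this in two parts. For the strip $-\delta\le Re(\lambda)\le 0$, the weighted-integrability assumptions on $g_\alpha,g_\beta$ make $x\mapsto\int_0^\infty e^{-xT}g_\bullet(T)\,dT$ finite and continuous for $x>-\rho$ with value $1$ at $x=0$; since $|\alpha|+|\beta|<1$ strictly, the bound on $|g(\lambda)|$ stays below $|\lambda+1|\ge 1-\delta$ for all sufficiently small $\delta>0$, pushing the spectral abscissa strictly left of $0$. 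For roots with large imaginary part, Riemann--Lebesgue decay of the two transforms forces $Re(\lambda)\to-\infty$ along any sequence of roots with $|Im(\lambda)|\to\infty$, so none can accumulate on the imaginary axis from infinity. These two facts, together with the exponential weight of the phase space $C_{0,\rho}$ controlling the essential spectrum, justify invoking the cited semigroup results to conclude exponential decay, hence asymptotic stability, of the trivial solution.
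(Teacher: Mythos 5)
Your proof is correct, but it takes a genuinely different decomposition from the paper's. The paper splits $D=f_1+f_2$ with $f_1(\lambda)=\lambda+1-\alpha\int_0^\infty e^{-\lambda T}g_\alpha(T)\,dT$ carrying the $\alpha$-delay term and $f_2$ only the $\beta$-term; it then needs the separate bounds $|f_1|\ge 1-|\alpha|$ and $|f_2|\le|\beta|$ on the imaginary axis, \emph{and} it must import the nontrivial external fact (from the cited work of Campbell and Jessop) that all zeros of $f_1$ have negative real part when $|\alpha|<1$, since $f_1$ is itself a transcendental function whose zero count inside the contour is not obvious. You instead take $f(\lambda)=\lambda+1$ and lump both delay terms into the perturbation $g$, so that $|g|\le|\alpha|+|\beta|<1\le|\lambda+1|$ on the entire closed right half-plane; the comparison function is a polynomial with its single zero at $\lambda=-1$, no external result is needed, and in fact the strict inequality $|D|\ge|\lambda+1|-|g|\ge 1-(|\alpha|+|\beta|)>0$ already rules out roots in $\mathrm{Re}(\lambda)\ge 0$ directly, making Rouch\'e almost decorative. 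Your approach is therefore more elementary and self-contained for Theorem~\ref{rouche}; what the paper's asymmetric split buys is that it is the template reused in Theorem~\ref{rouche2}, where $\alpha<-1$ is permitted and the $\alpha$-term can no longer be treated as a small perturbation of $\lambda+1$. Your final paragraph on pushing the spectral abscissa strictly left of zero (continuity of the Laplace transforms in a strip $-\delta\le\mathrm{Re}(\lambda)\le 0$ using the weighted integrability of $g_\alpha,g_\beta$) addresses a genuine subtlety of the infinite-delay setting that the paper dispatches by simply citing the Diekmann--Gyllenberg stability criterion; this extra care is sound and strengthens rather than deviates from the conclusion.
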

\begin{proof}
We will prove this result by the use of Rouch\'e's Theorem
\cite[p.\ 313]{Chur_Brow}. To begin, let
\[
f_1(\lambda)=\lambda+1-\alpha\int_0^\infty e^{-\lambda T}g_\alpha(T)\,dT
\qquad
f_2(\lambda)=-\beta\int_0^\infty e^{-\lambda T}g_\beta(T)\,dT,
\]
and consider the contour in the complex plane, $C=C_1 \cup C_2$, given by
\[ C_1:\ \lambda=Re^{i\theta},\ -\frac{\pi}{2}\le \theta\le\frac{\pi}{2}
\qquad C_2:\ \lambda=i y,\ -R\le y\le R,\]
where $\dss R\in\mathbb{R}$. On $C_1$, we have that
\begin{eqnarray*}
|f_2(\lambda)|&=& |-\beta \int_0^\infty e^{-RTe^{i\theta}}g_\alpha(T)\,dT|\\
&\le &|\beta|\int_0^\infty e^{-RT\cos(\theta)}|e^{-iRT\sin(\theta)}|g_\alpha(T)\,dT\\
&\le &|\beta|\int_0^\infty g_\alpha(T)\,dT\\
&=& |\beta|.
\end{eqnarray*}
Furthermore, we note that $f_1(\lambda)=Re^{i\theta}+1-\alpha G_\alpha $,
where
\begin{eqnarray*}
G_\alpha&=&
\int_0^\infty \left[\cos(RT\sin(\theta))+i\sin(RT\sin(\theta)) \right]
e^{-RT\cos(\theta)}g_\alpha(T)\,dT\\
&=& G^R_\alpha+iG^I_\alpha.
\end{eqnarray*}
Hence, we obtain
\begin{eqnarray*}
|f_1(\lambda)| &=& \sqrt{(R\cos(\theta)+1+G_\alpha^R)^2+(R\sin(\theta)+G_\alpha^I)^2}\\
&=& \sqrt{R^2+2R\cos(\theta)+1+|G_\alpha|^2+2G_\alpha^R+
2R[\cos(\theta)G_\alpha^R+\sin(\theta)G_\alpha^I]} \\
&=& \sqrt{R^2+2R\cos(\theta)+1+|G_\alpha|^2+2G_\alpha^R+
2R\int_0^\infty \cos(\theta+RT\sin(\theta))e^{-RT\cos(\theta)} g_\alpha(T)\,dT}\\
&\ge &\sqrt{R^2+1-2 -2R}\\
&=& \sqrt{(R-1)^2-2}.
\end{eqnarray*}
Thus, for $R$ sufficiently large, $|f_1(\lambda)| > |f_2(\lambda)|$ on $C_1$. On $C_2$, we have that
\begin{eqnarray*}
|f_2(\lambda)|&=& |-\beta \int_0^\infty e^{-iyT}g_\alpha(T)\,dT|\\
&\le & |\beta| \int_0^\infty g_\alpha(T)\,dT\\
&= & |\beta|.
\end{eqnarray*}
Additionally, it is crucial to note that if $|\alpha|<1$, then
\begin{eqnarray*}
|f_1(\lambda)| &=& |iy+1 -\alpha\int_0^\infty e^{-iyT} g_\alpha(T)\,dT|\\
&=& \sqrt{(1-\alpha\int_0^\infty \cos(yT) g_\alpha(T)\,dT)^2
+(y+\alpha\int_0^\infty \sin(yT) g_\alpha(T)\,dT)^2} \\
&\ge & \sqrt{(1-|\alpha|)^2}\\
&=& 1-|\alpha|.
\end{eqnarray*}
As a result, if $1-|\alpha|>|\beta|$, then $|f_1(\lambda)|>|f_2(\lambda)|$ on $C_2$.
We note that if $\alpha\ne 0$, and $\beta\ne 0$, then both $f_1$ and $f_2$ do not reduce to zero anywhere on $C$. Thus, by Rouch\'e's Theorem, if
$1-|\alpha|>|\beta|>0$ and $R$ is sufficiently large, then $f_1(\lambda)$ and
$D(\lambda)=f_1(\lambda)+f_2(\lambda)$ have the same number of zeroes
inside $C$. In the limit as $R\rightarrow\infty$, it is easy to see that $f_1(\lambda)$ and $D(\lambda)$
have the same number of zeroes with $\rm{Re}(\lambda)>0$.
It has been shown \cite{CampbellJessop09} that all the zeroes of $f_1(\lambda)$
have negative real part if $|\alpha|<1$.
This completes the proof.
\end{proof}
For our final result, we specialise to the situation in which one of the time delays is
discrete. That is,
\begin{equation}
 g_\alpha(T)=\delta(T-\tau_\alpha).
\label{galpha_disc}
\end{equation}
We begin with the following.
\begin{lem}\label{Ssignlem}
The function $\omega+\xi\sin(\omega \tau_\alpha)>0$ for all $\omega>0$ if, and
only if,
\begin{enumerate}
\item $-\frac{1}{\tau_\alpha}\le \xi\le 0$,
\item $0\le \xi\le \frac{u^*}{\tau_\alpha}$,
where $u^*\approx 4.603$ is the unique positive zero of
\begin{equation}
2\pi-\cos^{-1}\left(-\frac{1}{u}\right)+u\sin\left(2\pi-\cos^{-1}\left[-\frac{1}{u}\right]\right).
\label{ustareq}
\end{equation}
\end{enumerate}
\end{lem}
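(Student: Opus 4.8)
The plan is to first remove the parameter $\tau_\alpha$ by a rescaling, reducing the claim to a single one-parameter family. Putting $v=\omega\tau_\alpha$ and $c=\xi\tau_\alpha$, and multiplying through by $\tau_\alpha>0$, the inequality $\omega+\xi\sin(\omega\tau_\alpha)>0$ for all $\omega>0$ becomes
\[
\phi(v):=v+c\sin v>0 \quad\text{for all } v>0.
\]
Since $\xi=c/\tau_\alpha$, the two asserted intervals for $\xi$ translate to $-1\le c\le 0$ and $0\le c\le u^*$, so it suffices to characterise admissibility as $-1\le c\le u^*$. At the outset I would record $\phi(0)=0$, $\phi'(v)=1+c\cos v$, and the elementary bound $\sin v\le v$ on $[0,\infty)$.

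For $c\le 0$ I would argue directly: since $c\le 0$ and $\sin v\le v$, we have $c\sin v\ge cv$, whence $\phi(v)\ge(1+c)v$, which is positive for every $v>0$ as soon as $c\ge-1$; conversely, if $c<-1$ then $\phi(v)/v\to 1+c<0$ as $v\to0^+$, so $\phi$ is negative for small $v$. This pins the lower endpoint and yields condition (1). For $0\le c\le 1$ the derivative satisfies $\phi'(v)=1+c\cos v\ge 1-c\ge 0$, with equality only at isolated points, so $\phi$ is strictly increasing and $\phi(v)>\phi(0)=0$; thus every $c\in[0,1]$ is admissible and lies in condition (2).

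The substantive case is $c>1$, where $\phi$ ceases to be monotone and genuine minima appear on the arcs where $\sin v<0$. There $\phi'(v)=0$ forces $\cos v=-1/c$, and on the first such arc $(\pi,2\pi)$ the relevant root is the local minimiser
\[
v_0=2\pi-\cos^{-1}\!\left(-\tfrac1c\right)\in\left(\pi,\tfrac{3\pi}{2}\right),
\]
where $\phi'$ passes from negative to positive. Using $\sin v_0=-\sqrt{1-1/c^2}$ gives $\phi(v_0)=v_0-\sqrt{c^2-1}$. I would then show this first arc is the binding one: on arcs where $\sin v\ge 0$ one has $\phi(v)\ge v>0$, while on the $k$-th negative arc the minimiser sits at $v_0+2k\pi$ with value $v_0+2k\pi-\sqrt{c^2-1}$, smallest at $k=0$. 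Hence $\phi>0$ on $(0,\infty)$ if and only if $\phi(v_0)>0$.

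It remains to analyse the threshold. Writing $\psi(c):=\phi(v_0)=\big[2\pi-\cos^{-1}(-\tfrac1c)\big]+c\sin\!\big(2\pi-\cos^{-1}[-\tfrac1c]\big)$, which is exactly the expression in \eqref{ustareq}, I would verify $\psi(1)=\pi>0$ and $\psi(c)\to-\infty$ as $c\to\infty$, together with strict monotonicity: by the envelope identity $\psi'(c)=\partial_c\phi(v_0)=\sin v_0=-\sqrt{1-1/c^2}<0$. Thus $\psi$ has a unique zero $c=u^*$ (numerically $u^*\approx 4.603$), and $\phi(v_0)\ge 0$ precisely for $1<c\le u^*$. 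Combining the three regimes gives admissibility exactly for $-1\le c\le u^*$, i.e.\ conditions (1)--(2) after dividing by $\tau_\alpha$. The \emph{main obstacle} is this $c>1$ analysis: correctly selecting the minimiser $v_0$ in $(\pi,2\pi)$, ruling out a smaller value on any later arc, and establishing the monotonicity of $\psi$ so that $u^*$ is the unique root of \eqref{ustareq}. I would also flag the mild boundary subtlety that at $c=u^*$ the infimum of $\phi$ equals zero and is attained at $v_0$, so the endpoint is included in the non-strict inequalities of (1)--(2) in the sense $\inf_{v>0}\phi(v)\ge 0$.
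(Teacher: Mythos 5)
Your argument is correct and follows essentially the same route as the paper: the case $|\xi\tau_\alpha|\le 1$ is handled by elementary monotonicity/comparison, and for $\xi\tau_\alpha>1$ the loss of positivity is reduced to the tangency condition $S(\omega^*)=S'(\omega^*)=0$, which after the substitution $u=\xi\tau_\alpha$ is exactly equation \eqref{ustareq}. You additionally supply the details the paper labels ``straightforward'' --- that the binding minimum lies on the first arc $(\pi,2\pi)$, and that $\psi(c)$ decreases strictly from $\pi$ to $-\infty$ (via the envelope identity $\psi'(c)=\sin v_0<0$) so that $u^*$ is unique --- and you correctly flag the harmless endpoint subtlety at $\xi=u^*/\tau_\alpha$.
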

\begin{proof}
Let $S(\omega)=\omega+\xi\sin(\omega \tau_\alpha)$. Clearly, $S(0)=0$ and
$\displaystyle\lim_{\omega\rightarrow\infty}S(\omega)>0$. Now,
$\frac{dS}{d\omega}= 1+\xi \tau_\alpha \cos(\omega \tau_\alpha)$,
which is clearly positive
for all $\omega>0$ if $|\xi \tau_\alpha| < 1$. It follows that $S(\omega)>0$
for all $\omega>0$ if $|\xi \tau_\alpha| < 1$.

If $\xi\tau_\alpha=-1$ then $S'(0)=S''(0)=0$, $S'''(0)>0$ and
$S'(\omega)\ge 0$ for $\omega>0$. Thus, $S(\omega)>0$ for all $\omega>0$ in
this case as well. Finally, we note that if $\xi\tau_\alpha<-1$, then
$\left.\frac{dS}{d\omega}\right|_{\omega=0}<0$. The first result follows.

If $\xi>0$, however, $\left. \frac{dS}{d\omega}\right|_{\omega=0}>0$
for any value of $\tau_\alpha$. $S(\omega)$ can change sign only if it is
zero for some $\omega$. $S(\omega)$ will first become zero when there exists
$\xi,\tau_\alpha$, and $\omega^*>0$ such that
$S(\omega^*)=0$ and $S'(\omega^*)=0$. In other words, when
\begin{eqnarray*}
\omega^*+\xi\sin(\omega^* \tau_\alpha)&=&0\;,\\
1+\xi \tau_\alpha\cos(\omega^* \tau_\alpha) &=& 0\;.
\end{eqnarray*}
A simple rearrangement shows that these equations are equivalent to \eqref{ustareq},
where $u=\xi \tau_\alpha$. It is straightforward to show that
\eqref{ustareq} has a unique positive zero, $u^*$, and that if
$\xi>u^*/\tau_\alpha$, then there exists an $\omega>0$ such that $S(\omega)<0$.
\end{proof}
We are now in a position to state our final result.
\begin{thrm} \label{rouche2}
Let $g_\alpha$ be given by \eqref{galpha_disc} and assume that
$D(\lambda)$ in \eqref{char_eq} is analytic in $Re(\lambda)>-d$ for some $d>0$.
Then the trivial solution of \eqref{scaled_eqn} is asymptotically stable if
$\alpha<-1$, $\tau_\alpha\le -1/(2\alpha)$, and $|\beta|<-\alpha-1$.
\end{thrm}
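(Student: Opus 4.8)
The plan is to mirror the Rouch\'e-theorem argument used for Theorem~\ref{rouche}. Substituting \eqref{galpha_disc} into \eqref{char_eq} collapses the first integral to $e^{-\lambda\tau_\alpha}$, so I would split $D(\lambda)=f_1(\lambda)+f_2(\lambda)$ with
\[ f_1(\lambda)=\lambda+1-\alpha e^{-\lambda\tau_\alpha},\qquad f_2(\lambda)=-\beta\int_0^\infty e^{-\lambda T}g_\beta(T)\,dT, \]
and work on the same contour $C=C_1\cup C_2$ (the right half-circle of radius $R$ together with the segment of the imaginary axis). The goal is to show $|f_1|>|f_2|$ on all of $C$, invoke Rouch\'e to conclude that $D$ and $f_1$ have the same number of zeros with $\mathrm{Re}(\lambda)>0$ as $R\to\infty$, and then prove separately that $f_1$ has no such zeros. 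Throughout I write $a=-\alpha>1$, so the hypotheses read $a>1$, $\tau_\alpha\le 1/(2a)$, and $|\beta|<a-1$.

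On $C_1$ the bound is routine: since $\mathrm{Re}(\lambda)\ge0$ there, $|e^{-\lambda\tau_\alpha}|=e^{-R\tau_\alpha\cos\theta}\le1$, whence $|f_1(\lambda)|\ge|\lambda+1|-a\ge(R-1)-a$, while $|f_2(\lambda)|\le|\beta|\int_0^\infty g_\beta(T)\,dT=|\beta|$. For $R$ sufficiently large this gives $|f_1|>|f_2|$ on $C_1$.

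The crux is the estimate on $C_2$, where $\lambda=iy$. Here $|f_2(iy)|\le|\beta|<a-1$, so it suffices to prove the quantitative lower bound $|f_1(iy)|\ge a-1$ for every real $y$; this is the step where all three hypotheses must be used together, and it is the main obstacle. By evenness of $|f_1(iy)|$ in $y$ I may take $y\ge0$ and split on its size. For large $y$, namely $y\ge 2\sqrt{a(a-1)}$, the reverse triangle inequality applied to $f_1(iy)=(1+iy)+a e^{-iy\tau_\alpha}$ gives $|f_1(iy)|\ge\sqrt{1+y^2}-a\ge a-1$, the last step being the endpoint identity $\sqrt{1+4a(a-1)}=2a-1$. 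For small $y$, namely $0\le y<2\sqrt{a(a-1)}$, the constraint $\tau_\alpha\le1/(2a)$ forces $y\tau_\alpha<\sqrt{(a-1)/a}<1<\pi/2$, so $\cos(y\tau_\alpha)>0$ and $\sin(y\tau_\alpha)\le y\tau_\alpha$ on this range. I would then expand
\[ |f_1(iy)|^2-(a-1)^2 = y^2-2ay\sin(y\tau_\alpha)+2a\cos(y\tau_\alpha)+2a, \]
bound $-2ay\sin(y\tau_\alpha)\ge-2a\tau_\alpha y^2$, and use $1-2a\tau_\alpha\ge0$ to reduce the right-hand side to $(1-2a\tau_\alpha)y^2+2a\cos(y\tau_\alpha)+2a>0$, a sum of nonnegative terms plus a strictly positive cosine term. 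The role of $\tau_\alpha\le1/(2a)$ is precisely to keep $y\tau_\alpha$ inside the first quarter-period on the complementary range so that these elementary inequalities apply.

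With $|f_1|>|f_2|$ established on $C$, Rouch\'e's theorem yields that $D$ and $f_1$ have the same number of zeros inside $C$, hence, letting $R\to\infty$, the same number with $\mathrm{Re}(\lambda)>0$. It remains to show $f_1$ has none. The imaginary-axis estimate already shows $f_1$ has no zeros on $\mathrm{Re}(\lambda)=0$; moreover, applying Lemma~\ref{Ssignlem} with $\xi=\alpha$, legitimate since $\alpha\tau_\alpha\ge-1/2>-1$ places us in its first case, shows $\mathrm{Im}\,f_1(iy)=y+\alpha\sin(y\tau_\alpha)>0$ for $y>0$, so no root sits on the imaginary axis for any delay in $[0,\tau_\alpha]$. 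I would then run a homotopy in the delay from $0$ to $\tau_\alpha$: at delay $0$, $f_1(\lambda)=\lambda+1-\alpha$ has its only root at $\lambda=\alpha-1<0$, and since the hypotheses persist along the homotopy (the bound $|f_1(iy)|\ge a-1>0$ holding with delay $s$ whenever $as\le1/2$), no root ever crosses the imaginary axis, so by continuity of the roots the count of zeros with positive real part remains $0$. Hence $D$ has no zeros with $\mathrm{Re}(\lambda)\ge0$, and by the stability criterion recalled after \eqref{char_eq} the trivial solution of \eqref{scaled_eqn} is asymptotically stable.
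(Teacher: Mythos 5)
Your proposal is correct and follows the same Rouch\'e-theorem strategy as the paper: the same decomposition $D=f_1+f_2$ with $f_1(\lambda)=\lambda+1-\alpha e^{-\lambda\tau_\alpha}$, the same contour $C_1\cup C_2$, and the same key lower bound $|f_1(iy)|\ge -\alpha-1$ on the imaginary axis. The only differences are that you justify that bound by the elementary estimate $\sin x\le x$ (so that $y^2-2ay\sin(y\tau_\alpha)\ge(1-2a\tau_\alpha)y^2\ge 0$), where the paper instead invokes Lemma~\ref{Ssignlem} with $\xi=2\alpha$ to show $y\bigl(y+2\alpha\sin(y\tau_\alpha)\bigr)\ge 0$, and that you establish the absence of right-half-plane zeros of $f_1$ by a homotopy in the delay rather than citing \cite{CampbellJessop09}; both substitutions are sound and make the argument more self-contained.
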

\begin{proof}
The proof setup is identical to that of Theorem~\ref{rouche}, except that
$f_1=\lambda+1-\alpha e^{-\lambda \tau_\alpha}$ in the present case. The proof is the same
except that on $C_2$ we have
\begin{eqnarray*}
|f_1(\lambda)| &=&
\sqrt{1+\alpha^2-2\alpha\cos(y\tau_\alpha)+y(y+2\alpha\sin(y\tau_\alpha))}\\
&\ge & \sqrt{(1-|\alpha|)^2}\\
&=& -\alpha-1\;,
\end{eqnarray*}
where we have used Lemma~\ref{Ssignlem} to show that the term
$y(y+2\alpha\sin(y\tau_\alpha))$ is non-negative if
$\tau_\alpha\le -1/(2\alpha)$. We note
that all of the roots of $f_1(\lambda)$ have negative real parts if
	\[ \alpha<-1 \quad  \mbox{and} \quad \tau_\alpha\le -\frac{1}{2\alpha}\]
\cite{CampbellJessop09}.
The rest of the proof proceeds in a manner analogous to the proof of Theorem~\ref{rouche}.
\end{proof}
The results of this section are illustrated in Figure~\ref{instabfig}.
\begin{figure}[t!]
\centering
\begin{subfigure}{0.49\textwidth}
\centering
\caption{$\alpha>1$}
\includegraphics[scale=.4]{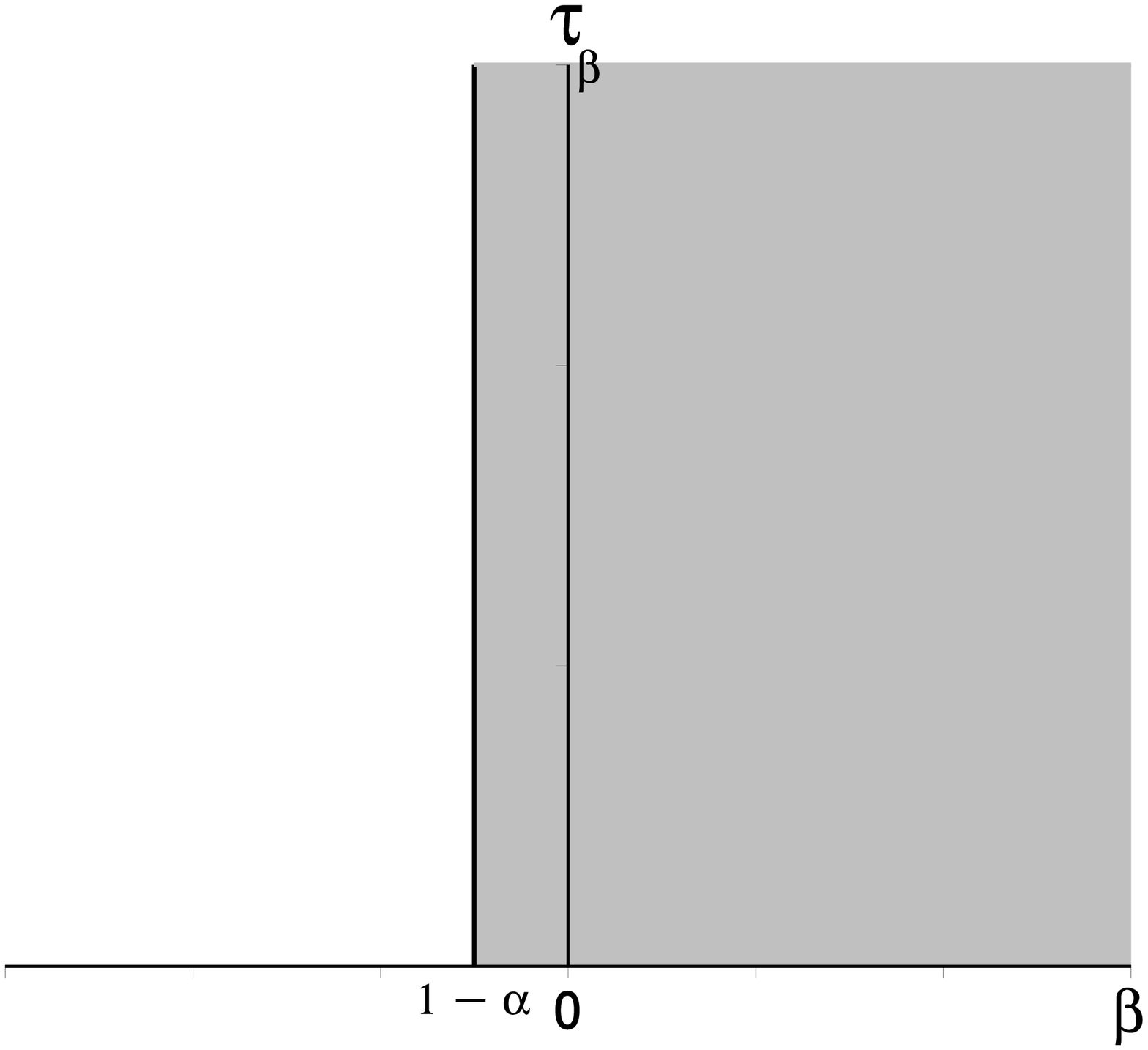}
\end{subfigure}
\begin{subfigure}{0.49\textwidth}
\centering
\caption{$0<\alpha<1$}
\includegraphics[scale=.4]{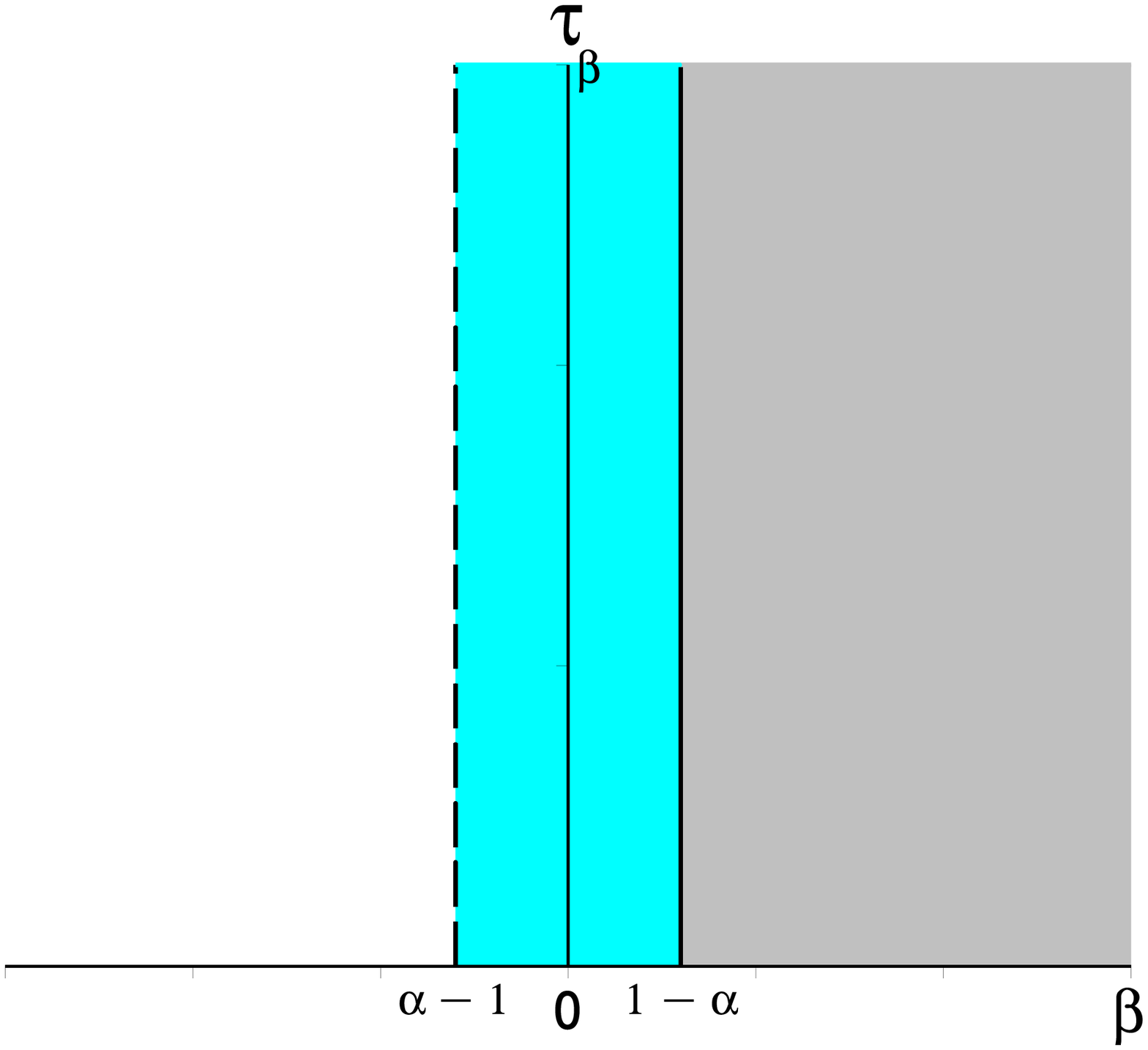}
\end{subfigure}
\begin{subfigure}{0.49\textwidth}
\centering
\caption{$-1<\alpha<0$}
\includegraphics[scale=.4]{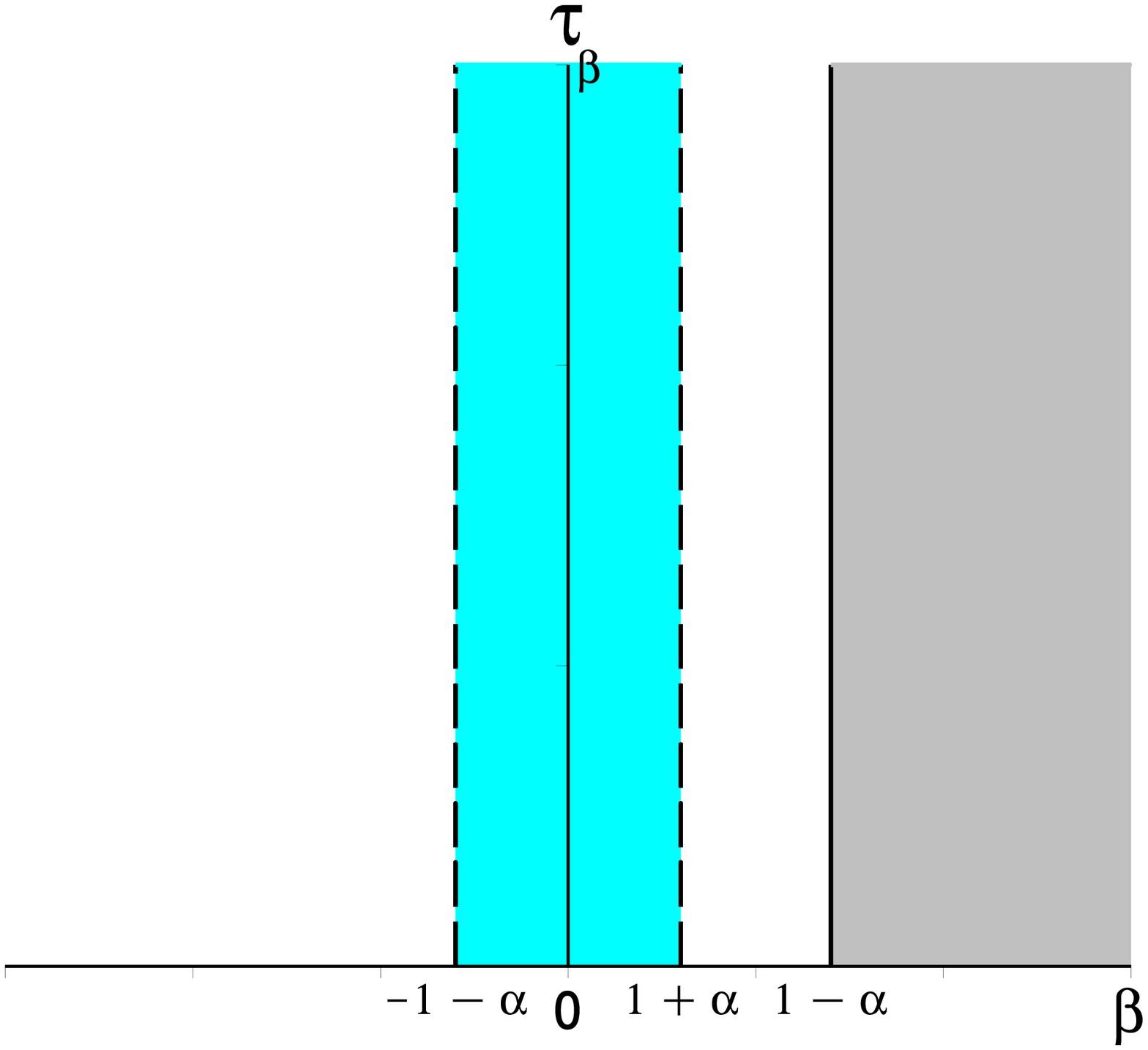}
\end{subfigure}
\begin{subfigure}{0.49\textwidth}
\centering
\caption{$\alpha<-1$ $\tau_{\alpha}<-1/(2\alpha)$}
\includegraphics[scale=.4]{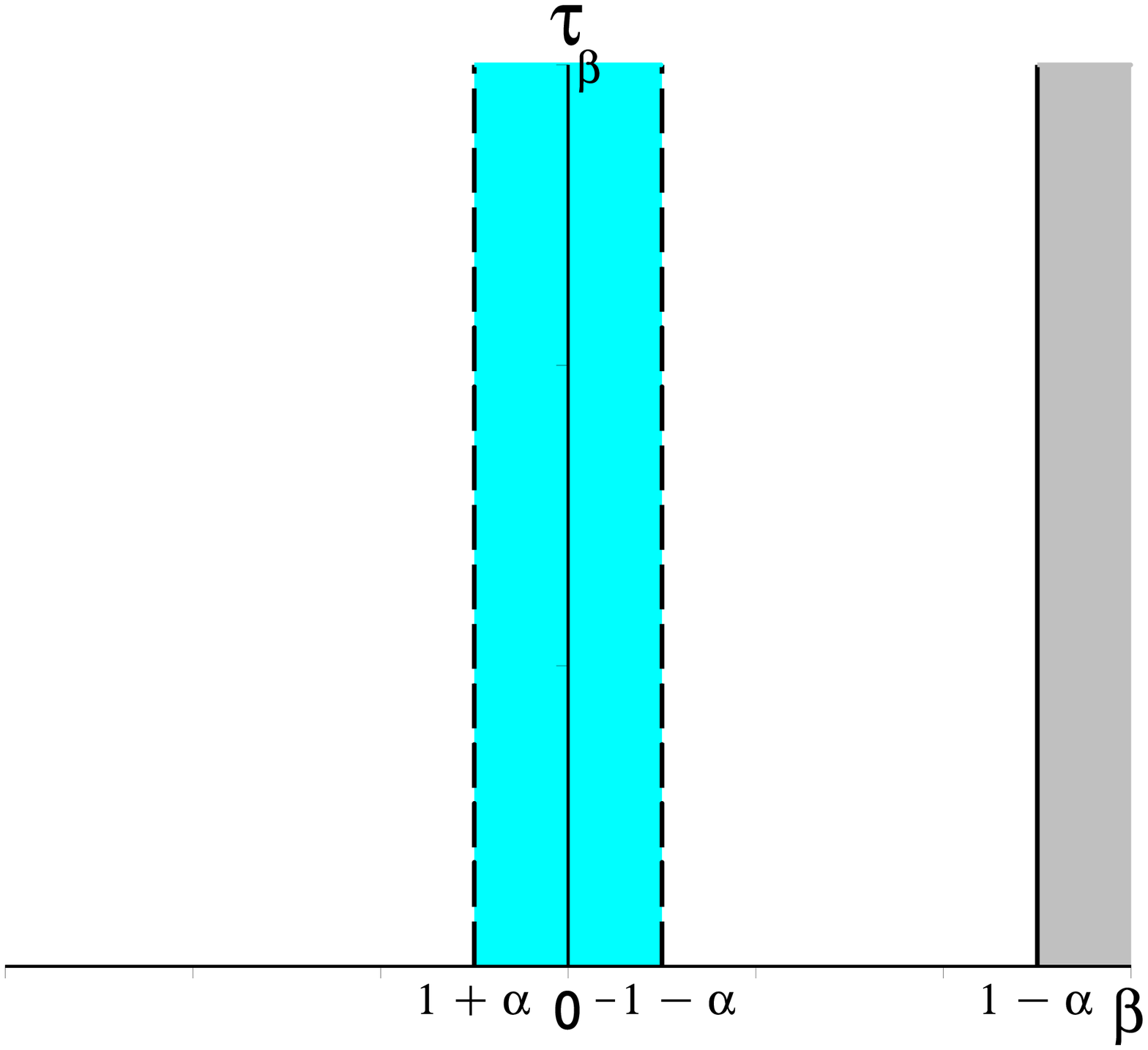}
\end{subfigure}
\caption{Illustration of results in the $(\beta, \tau_\beta)$  plane. Regions
shaded in grey correspond to the region of instability described in
Theorem~\ref{instabthm}. Regions shaded in cyan correspond to the regions of
stability described in Theorem~\ref{rouche} (b),(c), and Theorem~\ref{rouche2} (d).}
\label{instabfig}
\end{figure}
The unshaded regions of Figure~\ref{instabfig} are regions in which we do not know whether
the trivial solution is stable or not. To investigate this further, we need
more information about the distributions. This is the focus of Section~\ref{generation-mechanism}.
\section{Stability Boundaries}\label{generation-mechanism}
The last section described regions in parameter space where the trivial
solution is unstable and regions where it is asymptotically stable regardless
of the choice of distribution. However, these regions do not cover the
entire parameter space; indeed, there are regions where the stability is unknown.
This issue will be addressed in the rest of the paper. We will attempt to give a concrete description of the
complete stability region in the $(\beta,\tau_\beta)$ parameter space,
for the following choices of distributions:
\[ g_\alpha(T)=\delta(T-\tau_\alpha), \]
\begin{equation}
\displaystyle
g_\beta(T)= \frac{a^m T^{m-1} e^{-aT}}{(m-1)!}\;,\;a>0\;,\;m>0\;,
\label{gamma_distr}
\end{equation}
so the first time delay is discrete and the second is a gamma 
distribution.
With these choices, equation \eqref{scaled_eqn} becomes
\begin{equation}
\dot{x}(t) = -x(t) + \alpha x(t-\tau_\alpha) + \frac{\beta a^m}{(m-1)!}\int_{0}^{\infty}x(t-T)T^{m-1}e^{-aT}dT\;,
\label{rescaled_eqn}
\end{equation}
and the mean delay of $g_\beta$ is
\begin{equation}
\tau_\beta \stackrel{def}{=} \frac{m}{a}.
\label{tau_beta}
\end{equation}
Note that $\tau_\beta>0$, since $m>0$ and $a>0$.
As a consequence, the characteristic equation \eqref{char_eq} becomes
\begin{equation}
D(\lambda)=\lambda+1-\alpha e^{-\lambda\tau_\alpha} - \beta\frac{a^m}{(\lambda+a)^m} = 0.
\label{char_eqn}
\end{equation}
It is important to note that $D(\lambda)$ is analytic in $Re(\lambda)>-a$.

The boundary of the stability region will be determined by curves where
the characteristic equation has one or more eigenvalues with zero real
part. The main goal of this section is to give a concrete description of these curves.
To begin, we consider the case of a zero eigenvalue. Setting $\lambda=0$
in equation \eqref{char_eqn} shows that this will occur when $\beta=1-\alpha$.

The curves where pure imaginary eigenvalues occur are more complicated.
In equation \eqref{char_eqn}, we set $\lambda=i\omega$, $\omega>0$,
to obtain
\begin{equation}
i\omega+1-\alpha e^{-i\omega \tau_\alpha} - \beta \frac{a^m}{(i\omega + a)^m} = 0\;.
\label{complex_eqn}
\end{equation}
We define
\begin{equation}
\tan\theta \stackrel{def}{=} \frac{\omega}{a}\;,
\label{tantheta}
\end{equation}
and note that $\omega>0,a>0\ (\tau_\beta>0)$ imply that
$\theta\in(2k\pi,(2k+\frac{1}{2})\pi),\ k\in\mathbb{Z}^+\cup\{0\}$.
Substituting this into equation \eqref{complex_eqn},
rearranging, and decomposing into real and imaginary components, we arrive at the system:
\begin{eqnarray}
1 - \alpha \cos(\omega \tau_\alpha) - [\omega + \alpha\sin(\omega \tau_\alpha)]\tan (m\theta) - \beta r &=& 0\;,\nonumber\\
\omega + \alpha\sin(\omega \tau_\alpha) + [1-\alpha \cos(\omega \tau_\alpha)]\tan(m\theta) &=& 0\;,
\label{bif_surface}
\end{eqnarray}
where $r \stackrel{def}{=} \frac{\cos^m \theta}{\cos m\theta}$. The second
equation of system \eqref{bif_surface} yields
\begin{equation}
\begin{array}{lcl}
\tan(m\theta) &=& -\frac{\omega + \alpha\sin(\omega \tau_\alpha)}{1-\alpha\cos(\omega \tau_\alpha)}\\
&\stackrel{def}{=}& h(\omega)\;,
\end{array}
\label{tanmtheta}
\end{equation}
which we substitute back into the first equation in \eqref{bif_surface} to arrive at
\begin{equation}
\beta \cos^m \theta =
\frac{[(\omega +\alpha\sin(\omega \tau_\alpha))^2 + (1-\alpha\cos(\omega \tau_\alpha))^2]}{(1-\alpha\cos(\omega \tau_\alpha))}\,\cos(m\theta)\;.
\label{beta}
\end{equation}
The relation in \eqref{bif_surface} is manipulated to obtain
\begin{equation}
\beta =
\pm \sqrt{(1-\alpha\cos(\omega \tau_\alpha))^2+(\omega+\alpha\sin(\omega \tau_\alpha))^2}\, \cos^{-m} \theta\;,
\label{beta_expr}
\end{equation}
which is then substituted back into equation \eqref{beta}, giving
\begin{equation}
\cos(m\theta)
=
\pm\frac{(1-\alpha\cos(\omega \tau_\alpha))}{\sqrt{(1-\alpha\cos(\omega \tau_\alpha))^2+(\omega+\alpha\sin(\omega \tau_\alpha))^2}}\;.
\label{cosmtheta}
\end{equation}
From \eqref{tau_beta} and \eqref{tantheta}, we have that
\begin{equation}
\tau_\beta = \frac{m}{\omega}\tan\theta\;.
\label{mean_delay}
\end{equation}
Equation \eqref{tanmtheta} gives the following representation of $\theta$
\begin{equation}
m\theta
= \left\{
\begin{array}{lcl} \Arctan[h(\omega)]+2l\pi\;&,&\;\mbox{if}\;\; \cos(m\theta) \ge 0\;, \\
                            \Arctan[h(\omega)]+(2l+1)\pi\;&,&\;\mbox{if}\;\; \cos(m\theta) < 0\;.
\end{array}
\right.
\label{mtheta}
\end{equation}
where $\displaystyle l=0,1,2,\cdots$.
Employing \eqref{cosmtheta} and \eqref{mtheta} in
\eqref{beta_expr} and \eqref{mean_delay}
yields expressions for $\beta$ and $\tau_\beta$ in terms of
$(\omega,l)$ and the model parameters. Note that we must
restrict $\omega$ so that $\tau_\beta>0$. From \eqref{beta_expr}, \eqref{mtheta}, and \eqref{mean_delay}, it follows that the 2-tuple
$(\beta(\omega, l),\tau_\beta(\omega,l))$ may be `periodic'; depending on the values of the indices $m>0$ and $l\in\mathbb{Z}^{+}\cup\{0\}$. As an example, for a given $m\in\mathbb{Z}^+$, the parametric curve
$(\beta(\omega, l),\tau_\beta(\omega,l))$ replicates at an $l$ for which: (i) $\frac{2l+1}{m}=2$, if $\cos(m\theta)<0$; and (ii) $\frac{2l}{m}=2$, if $\cos(m\theta)>0$.
In Subsections~\ref{uniqueness-of-curves} and \ref{replication-of-curves}, we will attempt to characterise the dynamics of the total
number of curves in the $(\beta(\omega,l),\tau_\beta(\omega,l))$ parameter space for $m\in\mathbb{Z}^{+}$ and a restricted set of parameter values.
\subsection{How unique curves are generated in the $(\beta,\tau_\beta)$ parameter space}\label{uniqueness-of-curves}
This section examines how {\em unique} 2-tuples $(\beta,\tau_\beta)$ are generated in the parameter space, as the gamma distribution index
$m\in\mathbb{Z}^{+}$ is varied. These curves represent stability boundaries in the parameter space, and so an analysis of how they evolve is essential
in understanding stability properties of the trivial solution of \eqref{rescaled_eqn}.
Let us begin by recalling the results of Lemma~\ref{Ssignlem} and noting that the denominator
of \eqref{tanmtheta} satisfies $1-\alpha \cos(\omega \tau_\alpha) \ge 0$ if $|\alpha|\le 1$.
It is clear that if $0<\alpha\le \min(1,u^*/\tau_\alpha)$ or
$\max(-1,-1/\tau_\alpha)\le \alpha<0$,
then $h(\omega) \in (-\infty, 0]$ and $\displaystyle \lim_{\omega \rightarrow 0}h(\omega) = 0$. This leads
to $\displaystyle \frac{1}{m}\Arctan[h(\omega)] \in \left(-\frac{\pi}{2m},0\right]$. In the following description, we consider two cases, namely:
$\displaystyle \cos(m\theta)>0$ and $\displaystyle \cos(m\theta)<0$. Since $|\alpha|<1$, the sign of $\cos(m\theta)$ is determined by
the choice of $\pm$ in \eqref{cosmtheta}. As a consequence, when $\cos(m\theta)>0$, we have that
\begin{equation}
\displaystyle
\begin{array}{ccl}
\beta^{+}(\omega,l^{+}) &=&
\sqrt{[1-\alpha\cos(\omega \tau_\alpha)]^2 + [\omega+\alpha\sin(\omega \tau_\alpha)]^2}\,\cos^{-m}\theta^{+}(\omega,l^{+})\;,\\
\tau_\beta^{+}(\omega,l^{+}) &=& \frac{m}{\omega}\tan\theta^{+}(\omega,l^{+})\;,\\
\theta^{+}(\omega,l^{+}) &=& \frac{\Arctan [h(\omega)]}{m} + \frac{2l^{+}\pi}{m}\;,
\end{array}
\label{positive_cos}
\end{equation}
where we have used the fact that $\cos^m(\theta)>0$. From this,
we observe that $\tau_\beta^{+}(\omega,0) = \frac{m}{\omega}\tan(\frac{1}{m}\Arctan[h(\omega)]) < 0$ for all $m \in \mathbb{Z}^{+}$. This means that we
need to restrict our consideration to $l^+>0.$

To understand how unique curves are generated in parameter space, for \eqref{positive_cos}, we need to determine when
$\displaystyle (\beta^{+}(\omega,l^{+}), \tau_\beta^{+}(\omega,l^{+})) \neq (\beta^{+}(\omega,l^{+}+p), \tau_\beta^{+}(\omega,l^{+}+p))$, where
$p\in \mathbb{Z}^{+}$ is to be determined. If we consider $m$ odd, we see that $\displaystyle \cos^{m}\theta^{+}(\omega,l^{+})$
is $2\pi$-periodic in $\theta$ since 
\[ \cos^{m}\theta^{+}(\omega,l^{+}) = \frac{1}{2^n}\left[1+\cos 2\theta^{+}(\omega,l^{+})\right]^n\, \cos\theta^{+}(\omega,l^{+}),\] 
where $n\in\mathbb{Z}^{+}\cup\{0\}$.
Now, if we set $p=m$, we note that $\displaystyle \theta^{+}(\omega,l^{+}+m)=\theta^{+}(\omega,l^{+})+2\pi$, which implies that $\displaystyle \cos^{m}\theta^{+}(\omega,l^{+})
=\cos^{m}\theta^{+}(\omega,l^{+}+m)$ and $\displaystyle \tan\theta^{+}(\omega,l^{+}) = \tan\theta^{+}(\omega,l^{+}+m)$. From this, we obtain that
$\displaystyle \left(\beta^{+}(\omega,l^{+}),\tau_\beta^{+}(\omega,l^{+})\right)=\left(\beta^{+}(\omega,l^{+}+m),\tau_\beta^{+}(\omega,l^{+}+m)\right)$,
which means that the parametric curves for $l^{+}$ and $l^{+}+m$ are identical.  We conclude that parametric curves of the form
$\displaystyle \left(\beta^{+}(\omega,l^+),\tau_\beta^{+}(\omega,l^+)\right)$ are distinct for $l^+=1,2,\cdots m-1$, with $m$ fixed. The above analysis is easily extended to the case $m$ even. Here we see that $\displaystyle \cos^{m}\theta^{+}(\omega,l^{+}) = \frac{1}{2^k}\left[1+\cos 2\theta^{+}(\omega,l^{+})\right]^k$, which is clearly $\pi$-periodic in $\theta$. Furthermore, we note that $\displaystyle \theta^{+}(\omega,l^{+}+\frac{m}{2})
= \theta^{+}(\omega,l^{+}) + \pi$, implying that
$\displaystyle \left(\beta^{+}(\omega,l^{+}),\tau_\beta^{+}(\omega,l^{+})\right)= \left(\beta^{+}(\omega,l^{+}+\frac{m}{2}),\tau_\beta^{+}(\omega,l^{+}+\frac{m}{2})\right)$. Therefore, for $m$ even, curves of the form
$\left(\beta^{+}(\omega,l^+),\tau_\beta^{+}(\omega,l^+)\right)$ are distinct for $l^+=1,2,\cdots, \frac{m}{2}-1$. We now look at the case $\displaystyle \cos (m\theta) < 0$, for which
\begin{equation}
\displaystyle
\begin{array}{ccl}
\beta^{-}(\omega,l^{-}) &=& -\sqrt{[1-\alpha\cos(\omega \tau_\alpha)]^2+[\omega + \alpha\sin(\omega \tau_\alpha)]^2}\,\cos^{-m}\theta^{-}(\omega,l^{-})\;,\\
\tau_\beta^{-}(\omega,l^{-}) &=& \frac{m}{\omega}\tan\theta^{-}(\omega,l^{-})\;,\\
\theta^{-}(\omega,l^{-}) &=& \frac{\Arctan[h(\omega)]}{m} + \frac{(2l^{-}+1)\pi}{m}\;.
\end{array}
\label{negative_cos}
\end{equation}
By an argument analogous to the one above, we arrive at the following results. For $m$ odd, the curves
$\displaystyle \left(\beta^{-}(\omega,l^-),\;\tau_\beta^{-}(\omega,l^-)\right)$
are distinct for $\displaystyle l^{-}=1,2,\cdots,m-1$. Similarly, for $m$ even, we get that the curves
$\displaystyle \left(\beta^{-}(\omega,l^-),\;\tau_\beta^{-}(\omega,l^-)\right)$ are distinct for $l^-=1,2,\cdots,\frac{m}{2}-1$. In conclusion, we note that the total number of distinct curves
$(\beta^{-}(\omega,l^{-}),\tau_\beta^{-}(\omega,l^{-}))$ and $(\beta^{+}(\omega,l^{+}),\tau_\beta^{+}(\omega,l^{+}))$ in parameter space
is influenced only by the index $m$. It is also clear that the total number of such curves in parameter space increases as $m$ is increased.

\subsection{Generation of new curves in the $(\beta,\tau_\beta)$ parameter space}\label{replication-of-curves}
As $m$ increases, new curves are `born' in the $(\beta,\tau_\beta)$
parameter space, as suggested by the preceding discussion. Thus, as $m$ increases, so does the number of parametric curves. Recalling that $\tau_\beta$ is restricted to be positive and $\cos^m(\theta)>0$ shows that the curves
$(\beta^+(\omega,l^+),\tau_\beta^+(\omega,l^+))$ lie in the first quadrant,
while
$(\beta^-(\omega,l^-),\tau_\beta^-(\omega,l^-))$ lie in the second quadrant.
It remains to determine how or whether $l^+,l^-$ can be chosen so that
 $\tau_\beta^+,\tau_\beta^->0$. That is, such that
$\theta^+(\omega,l^+),\theta^+(\omega,l^-) \in (2k\pi,(2k+\frac{1}{2})\pi)$
for some $k\in\mathbb{Z}^+\cup\{0\}$.

We begin by recalling that $\frac{1}{m}\Arctan[h(\omega)] \in (-\frac{\pi}{2m},0)$.
From this, it is evident that
$\theta^{+}(\omega,l^+)=\frac{1}{m}\Arctan[h(\omega)] + \frac{2l^{+}\pi}{m} \in
\frac{\pi}{m}\left(\frac{4l^{+}-1}{2},2l^{+}\right)$.
Thus, for $\theta^{+}$ to overlap in the correct interval (so that $\tau_\beta>0$), we must
have integers $m,l^+$, and $k$ such that
\[ 2k\pi\le \frac{4l^+-1}{2m}\pi < (2k+\frac{1}{2})\pi,\quad
\mbox{or}\quad
 2k\pi< \frac{2l^+}{m}\pi \le (2k+\frac{1}{2})\pi. \]
The first of these inequalities is equivalent to
\[ mk+\frac{1}{4}\le l^+ < mk+\frac{1}{4}+\frac{m}{4},\]
from which we conclude that $m\ge 4$ in order for this to be possible.
Further, by recalling that $l^+=0,..,m-1$, we find that we must have $k=0$
to satisfy the inequality. A similar analysis of the second inequality
yields the same restrictions. Setting $k=0$, we can determine which values
of $l^+$ generate appropriate values of $\theta^+$. This is summarised
in Table~\ref{lvalues}.
\begin{table}
\begin{center}
\begin{tabular}{|c|c|c|}
\hline
$m$& $l^+$&$l^-$\\
\hline
$1$ & none & none\\
\hline
$2,3$ & none & 0\\
\hline
$4,5$ & $1$ & $0$ \\
\hline
$6,7$ & $1$ & $0,1$ \\
\hline
$8,9$ & $1,2$ & $0,1$ \\
\hline
$\vdots$& $\vdots$ & $\vdots$\\
\hline
\end{tabular}
\caption{Values of $l$ which yield curves with $\tau_\beta>0$ for various values of $m$}
\label{lvalues}
\end{center}
\end{table}

Similarly, we have that
$\theta^-(\omega,l^-)\in\frac{\pi}{m}\left(\frac{4l^{+}+1}{2},2l^{+}\right)$.
Thus, for $\theta^-$ to be in the appropriate interval so that $\tau_\beta>0$,
we require that
\[ 2k\pi\le \frac{4l^++1}{2m}\pi < (2k+\frac{1}{2})\pi,\quad
\mbox{or}\quad
 2k\pi< \frac{4l^++2}{2m}\pi \le (2k+\frac{1}{2})\pi. \]
Analysis as above shows that these inequalities can only be satisfied
for $m\ge 2$ and $k=0$. This gives rise to the values of $l^-$ shown
in Table~\ref{lvalues}.
Unfortunately, the nonlinearity of the function $h(\omega)$ makes it
impossible to analytically compute the $\omega$ interval giving rise to the
curves described by Table~\ref{lvalues}. However, the curves can be computed
numerically.  Figure~\ref{bifurcation_curves1} shows an ensemble of some specific curves in the parameter space $(\beta,\tau_\beta)$ to give credence to our theoretical results. It is important to note that the only relevant quadrants are the first and the second, since $\tau_\beta>0$.
\begin{figure}[t!]
    \centering
    \begin{subfigure}[t]{0.5\textwidth}
        \centering
        \includegraphics[scale=.4]{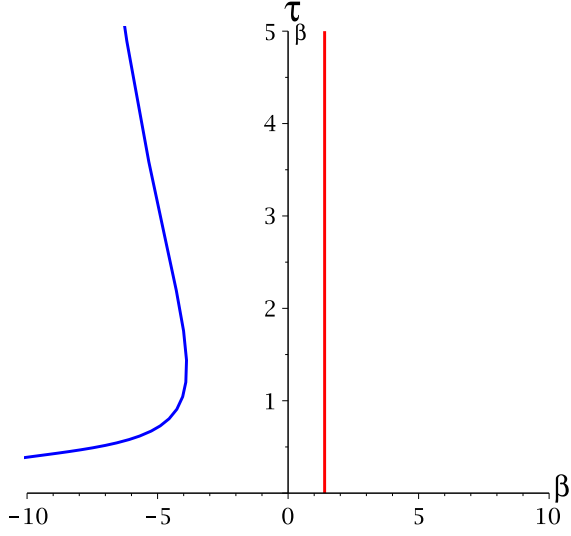}
        \caption{$m=3$.}
        \label{fig:bif_curves4}
    \end{subfigure}%
    ~
    \begin{subfigure}[t]{0.5\textwidth}
        \centering
        \includegraphics[scale=.4]{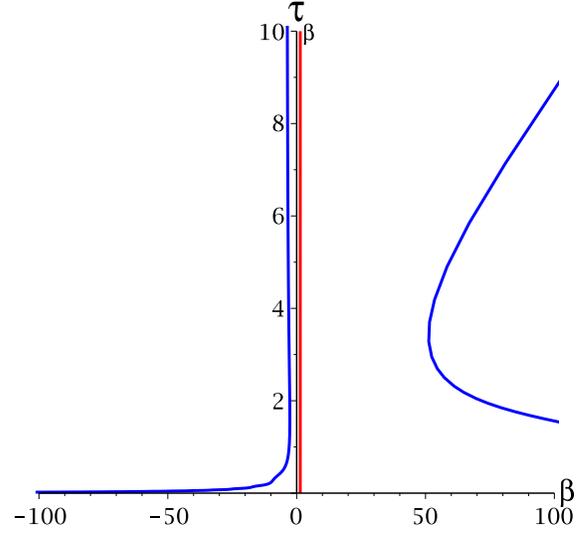}
        \caption{$m=5$.}
        \label{fig:bif_curves3}
    \end{subfigure}
    ~
    \begin{subfigure}[t]{0.5\textwidth}
        \centering
        \includegraphics[scale=.4]{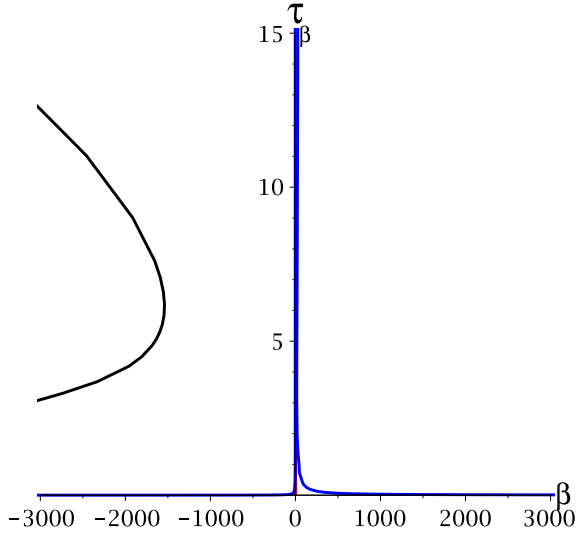}
        \caption{$m=7$.}
        \label{fig:bif_curves1}
    \end{subfigure}%
    ~
    \begin{subfigure}[t]{0.5\textwidth}
        \centering
        \includegraphics[scale=.4]{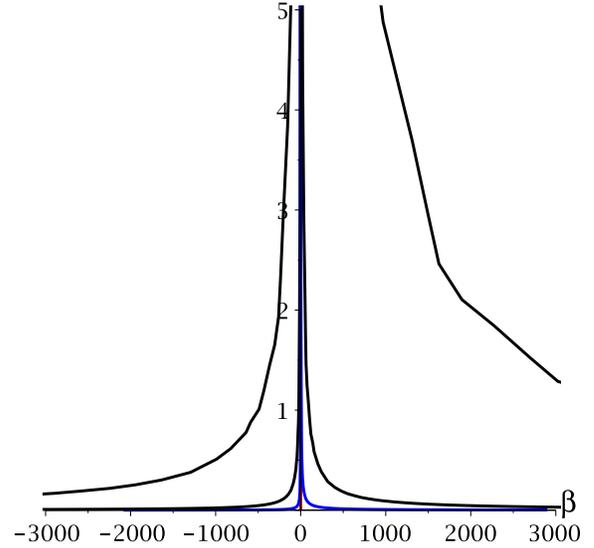}
        \caption{$m=30$.}
        \label{fig:bif_curves2}
    \end{subfigure}
    \caption{A display of some bifurcation curves (blue, black) in the $(\beta,\tau_\beta)$ parameter space, with $m$ as indicated.
The parameters used here are $\alpha=-0.4$, $\tau_\alpha=1$ and $l$ as
shown in Table~\ref{lvalues}. The range of $\omega>0$ was varied to accommodate different scales. Only six of 15 curves for $m=30$ appear at the chosen scale.
The red line is the line $\beta=1-\alpha$.
}
    \label{bifurcation_curves1}
\end{figure}

\subsection{Preservation of stability when traversing curves in parameter space}\label{number-of-roots-with-real-part}
\noindent
Starting from the characteristic equation \eqref{char_eqn},
we can divide through by $a^m$, substitute $\tau_\beta=\frac{m}{a}$, and rearrange
to obtain
\begin{equation}
\displaystyle
\left(\lambda+1-\alpha e^{-\lambda\tau_\alpha}\right)\left(\frac{\lambda\tau_\beta}{m}+1\right)^m - \beta  = 0.
\label{char_eqn_tau}
\end{equation}
Let us consider first the variation of $\beta$.  Differentiating equation
\eqref{char_eqn_tau} with respect to $\beta$, assuming $\lambda=\lambda(\beta)$, and
rearranging yields
\begin{equation}
\displaystyle
\frac{d\lambda}{d\beta} = \frac{1}{\left(1+\alpha \tau_\alpha e^{-\lambda\tau_\alpha}\right)\left(\frac{\lambda\tau_\beta}{m}+1\right)+\tau_\beta\left(\lambda+1-\alpha e^{-\lambda\tau_\alpha}\right)}\;.
\end{equation}
Substituting $\lambda=0$ then leads to
\begin{equation}
\displaystyle
\left.\frac{d\lambda}{d\beta}\right|_{\lambda=0} = \frac{1}{1+\alpha \tau_\alpha +\tau_\beta\left(1-\alpha \right)}\;.
\end{equation}
Thus, along the line $\beta=1-\alpha$, we have
\begin{equation}
\frac{d\lambda}{d\beta}
\left\{
\begin{array}{cc}
>0 & \mbox{ if } 0\le \alpha<1 \\
\gtrless 0& \mbox{ if } \alpha>1 \mbox{ and }  \tau_\beta\gtrless \frac{1+\alpha\tau_\alpha}{\alpha -1}\\
\gtrless 0& \mbox{ if } \alpha<0 \mbox{ and }  \tau_\beta\lessgtr\frac{1+\alpha\tau_\alpha}{\alpha -1}
\end{array}
\right.
\end{equation}
Differentiating \eqref{char_eqn_tau} with respect to $\tau_\beta$,
assuming $\lambda=\lambda(\tau_\beta)$, and rearranging gives
\begin{equation}
\displaystyle
\frac{d\lambda}{d\tau_\beta} = \frac{\lambda\left(\alpha e^{-\lambda\tau_\alpha}-s-1\right)}{\left(1+\alpha \tau_\alpha e^{-\lambda\tau_\alpha}\right)\left(\frac{\lambda\tau_\beta}{m}+1\right)+\tau_\beta\left(\lambda+1-\alpha e^{-\lambda\tau_\alpha}\right)}\;.
\end{equation}
Substituting $\lambda=i\omega$, $\omega>0$, gives the following.
\begin{equation}
\displaystyle
\left(\left. \frac{d\lambda}{d\tau_\beta}\right|_{\lambda=i\omega}\right) = \frac{i\omega\left(\alpha e^{-i\omega \tau_\alpha}-i\omega-1\right)}{\left(1+\alpha \tau_\alpha e^{-i\omega \tau_\alpha}\right)\left(\frac{i\omega\tau_\beta}{m}+1\right)+\tau_\beta\left(i\omega+1-\alpha e^{-i\omega \tau_\alpha}\right)}\;.
\end{equation}
This leads to
\begin{equation}
\displaystyle
\mbox{Re}\left(\left. \frac{d\lambda}{d\tau_\beta}\right|_{\lambda=i\omega}\right)=\frac{AC+BD}{C^2+D^2}\;,
\label{real-part}
\end{equation}
where
\begin{equation}
\displaystyle
\begin{array}{lcl}
A &=& \omega [\omega+\alpha\sin(\omega \tau_\alpha)]\;,\\
B &=& \omega[\alpha\cos(\omega \tau_\alpha)-1]\;,\\
C &=& 1+\alpha \tau_\alpha\cos(\omega \tau_\alpha)+\frac{\alpha\omega\tau_\beta\tau_\alpha}{m}\sin(\omega \tau_\alpha)+\tau_\beta\left[1-\alpha\cos(\omega \tau_\alpha) \right]\\
  &=& \frac{\alpha}{a}(a\tau_\alpha-m)\cos(\omega \tau_\alpha)+\frac{\alpha\omega \tau_\alpha}{a}\sin(\omega \tau_\alpha)+\frac{1}{a}(m+a)\;,\\
D &=& -\alpha \tau_\alpha\sin(\omega \tau_\alpha)+\frac{\omega\tau_\beta}{m}\left[1+\alpha \tau_\alpha\cos(\omega \tau_\alpha)\right]+\tau_\beta\left[\omega +\alpha\sin(\omega \tau_\alpha)\right]\\
  &=& \frac{\alpha}{a}(m-a \tau_\alpha)\sin(\omega \tau_\alpha)+\frac{\alpha\omega \tau_\alpha}{a}\cos(\omega \tau_\alpha)+\frac{\omega}{a}(1+m)\;.
\end{array}
\end{equation}
We note that the numerator of the right hand side of \eqref{real-part} is given by
\begin{equation}
\displaystyle
\begin{array}{lcl}
AC+BD &=& \alpha\omega \tau_\alpha\left[\sin(\omega \tau_\alpha)+\omega\cos(\omega \tau_\alpha)\right]+\frac{\alpha\omega^2}{a}(1-\tau_\alpha)\cos(\omega \tau_\alpha)\\
& & +\frac{\alpha\omega}{a} \left(\omega^2 \tau_\alpha+a\right)\sin(\omega \tau_\alpha)
 +\frac{\omega^2}{a}\left(a-1+\alpha^2 \tau_\alpha\right)\;.
\end{array}
\label{numerator: real part}
\end{equation}
We will show how $\mbox{Re}\left(\left.\frac{d\lambda}{d\tau_\beta}\right|_{\lambda=i\omega}\right)$ is related to $\frac{d\beta^{\pm}}{d\omega}$.
Differentiation of \eqref{beta_expr} with respect to $\omega$ leads to
\begin{equation}
\frac{d\beta^{\pm}}{d\omega}
= \beta^{\pm} \left[ \frac{\hat A \hat A'+\hat B \hat B'}{\hat{A}^2+\hat{B}^2} + m\tan(\theta)\frac{d\theta}{d\omega} \right]\;,
\label{dbdw}
\end{equation}
where $\hat A :=A/\omega$ and $\hat B := B/\omega$. Using equations \eqref{tantheta} and \eqref{tanmtheta}, and simplifying, we arrive at
\begin{equation}
\begin{array}{lcl}
\dss
\frac{d\beta^\pm}{d\omega} &=&
\dss
\left.\frac{\beta^\pm}{\hat{A}^2+\hat{B}^2}
\right[\omega +\alpha \sin(\omega \tau_\alpha) +\alpha \tau_\alpha \left[\sin(\omega \tau_\alpha)+\omega\cos(\omega \tau_\alpha)\right] \\
&&\displaystyle \left. \hspace*{.75in}+\, \frac{\omega}{a} \left( \alpha (1-\tau_\alpha)\cos(\omega \tau_\alpha) + \alpha \tau_\alpha \omega \sin (\omega \tau_\alpha) +\alpha^2\tau_\alpha-1 \right)\right]\\
&=&\dss \omega\beta^\pm\,\left(\frac{AC+BD}{A^2+B^2}\right)\;.
\end{array}
\label{dbeta-domega-expression}
\end{equation}
By substituting \eqref{dbeta-domega-expression} into \eqref{real-part}, we arrive at the expression
\begin{equation}
\displaystyle
\mbox{Re}\left(\left. \frac{d\lambda}{d\tau_\beta}\right|_{\lambda=i\omega}\right)
=\frac{1}{\omega\beta^\pm}\left(\frac{A^2+B^2}{C^2+D^2}\right)\frac{d\beta^\pm}{d\omega}\;.
\label{relationship-between-derivatives}
\end{equation}
From the relationship \eqref{relationship-between-derivatives}, $\beta^{+}>0$, $\beta^{-}<0$, and
$\omega>0$, it is clear that
\begin{equation}
\mbox{Re}\left(\left. \frac{d\lambda}{d\tau_\beta}\right|_{\lambda=i\omega}\right)\glt 0
\ \Longleftrightarrow \ \frac{d\beta^{+}}{d\omega}\glt 0 \ \mbox{ or }\ \frac{d\beta^{-}}{d\omega}\lgt 0\;.
\label{derivative-sign-conditions}
\end{equation}
This leads to the following proposition.
\begin{prop}\label{directionprop}
The number of roots of the characteristic equation \eqref{char_eqn} with
positive real parts is increasing/decreasing when $\beta^+(\omega)$ is an
increasing/decreasing function of $\omega$ and the opposite for
$\beta^-(\omega)$.
\end{prop}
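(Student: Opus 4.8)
The plan is to read the proposition off the identity \eqref{relationship-between-derivatives} once the standard root-crossing principle is in place. First I would recall that $D(\lambda)$ in \eqref{char_eqn} is analytic, so at a point of a bifurcation curve where $\lambda=i\omega$ is a simple root the implicit function theorem produces a smooth branch $\lambda(\tau_\beta)$ passing through $i\omega$. The sign of $\mbox{Re}\left(\left.\frac{d\lambda}{d\tau_\beta}\right|_{\lambda=i\omega}\right)$ then records the direction in which the conjugate pair crosses the imaginary axis as $\tau_\beta$ is increased: a positive value pushes two roots from $\mbox{Re}(\lambda)<0$ into $\mbox{Re}(\lambda)>0$, so the number of roots with positive real part increases by two, while a negative value decreases it by two. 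Hence the monotonic behaviour of the unstable-root count, as the curve is crossed with $\tau_\beta$ increasing, is governed entirely by the sign of this real part.

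The second step is purely algebraic and has essentially been carried out in \eqref{relationship-between-derivatives}: since $\omega>0$ and the factor $(A^2+B^2)/(C^2+D^2)$ is strictly positive, the sign of $\mbox{Re}\left(\left.\frac{d\lambda}{d\tau_\beta}\right|_{\lambda=i\omega}\right)$ equals the sign of $\frac{1}{\beta^\pm}\frac{d\beta^\pm}{d\omega}$. On the curves with $\cos(m\theta)>0$ one has $\beta^+>0$ by \eqref{positive_cos}, so the crossing velocity has the same sign as $\frac{d\beta^+}{d\omega}$, and the count increases (resp.\ decreases) exactly when $\beta^+$ is increasing (resp.\ decreasing) in $\omega$. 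On the curves with $\cos(m\theta)<0$ one has $\beta^-<0$ by \eqref{negative_cos}, which reverses the sign, so the count increases when $\beta^-$ is decreasing and decreases when $\beta^-$ is increasing---the opposite correspondence. These are exactly the equivalences summarised in \eqref{derivative-sign-conditions}, and combining them with the crossing principle of the first step yields the proposition.

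The main obstacle is justifying the nondegeneracy hypotheses behind the crossing principle, rather than the algebra, which is already done. I would need to verify that $C^2+D^2\neq 0$ along the curves, so that $i\omega$ is a simple root and $\frac{d\lambda}{d\tau_\beta}$ is finite and well defined, and that the crossing is transversal, i.e.\ $\mbox{Re}\left(\left.\frac{d\lambda}{d\tau_\beta}\right|_{\lambda=i\omega}\right)\neq 0$. By \eqref{relationship-between-derivatives} the latter is equivalent to $\frac{d\beta^\pm}{d\omega}\neq 0$, so the clean $\pm 2$ change in the count is guaranteed precisely on the monotonicity intervals of $\beta^\pm(\omega)$; the turning points of $\beta^\pm$, where the curves have vertical tangents, are exactly the degenerate values of $\omega$ at which a pair may touch the axis without crossing, and these are naturally excluded from the statement. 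This makes the monotonicity intervals of $\beta^\pm$ the correct objects on which to track the stability changes.
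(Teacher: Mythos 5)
Your proposal is correct and follows essentially the same route as the paper, which presents Proposition~\ref{directionprop} as an immediate consequence of the identity \eqref{relationship-between-derivatives} together with the sign conditions \eqref{derivative-sign-conditions} and the signs $\beta^+>0$, $\beta^-<0$. The only difference is that you make explicit the standard root-crossing justification (simplicity of the root $i\omega$, i.e.\ $C^2+D^2\neq 0$, and transversality $\frac{d\beta^\pm}{d\omega}\neq 0$), which the paper leaves implicit; this is a reasonable and slightly more careful rendering of the same argument.
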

\section{Exact Stability Region}
We are now in a position to more fully describe the region in the
$(\beta,\tau_\beta)$ parameter space where the trivial solution of 
\eqref{rescaled_eqn}
is asymptotically stable. We will do so by putting together the results of Theorems~\ref{instabthm}-\ref{rouche}
and Proposition~\ref{directionprop} and the description
of the curves where the characteristic equation has a pair of pure imaginary roots.

\subsection{The case $m=1.$}
When $m=1$ the analysis simplifies considerably. In this case there can
be at most one curve along which the characteristic equation has a pair of
pure imaginary roots. We describe this curve and how/when it forms part of
the boundary of the stability region below.

To begin, we set $m=1$ and $\tau_\beta=1/a$ in equations \eqref{tantheta},
\eqref{tanmtheta} and \eqref{beta} to obtain
\begin{equation}
\tau_\beta= \frac{h(\omega)}{\omega}
\label{taum1}
\end{equation}
\begin{equation}
\beta=\frac{(1-\alpha\cos(\omega\tau_\alpha))^2+(\omega+\alpha\sin(\omega\tau_\alpha)^2}{1-\alpha\cos(\omega\tau_\alpha)}
= (1-\alpha\cos(\omega\tau_\alpha))+\frac{(\omega+\alpha\sin(\omega\tau_\alpha)^2}{1-\alpha\cos(\omega\tau_\alpha)}
\label{beta_m1}
\end{equation}
where $h(\omega)$ is given by \eqref{tanmtheta}.
For fixed $\alpha$ and $\tau_\alpha$, these equations define a curve in the $\beta,\tau_\beta$ parametrically in
terms of $\omega$.
Further, note the following limits.
\begin{equation}
\dss \lim_{\omega\rightarrow 0} \tau_\beta(\omega) = \lim_{\omega\rightarrow 0}
\frac{-(1+\alpha\tau_\alpha\frac{\sin(\omega\tau_\alpha)}{\omega\tau_\alpha})}{1-\alpha\cos(\omega\tau_\alpha)}
=-\frac{1+\alpha\tau_\alpha}{1-\alpha},\label{taulimit}
\end{equation}
\[
\dss\lim_{\omega\rightarrow 0} \beta=1-\alpha,
\]
and
\[ \tau_\beta=0 \Rightarrow h(\omega)=0 \Rightarrow \omega+\alpha \sin(\omega\tau_\alpha)=0
\Rightarrow \beta=1-\alpha\cos(\omega\tau_\alpha) \]
This leads to the following.

\begin{prop}\label{m1P1}
If $m=1$, $0<\alpha<1$ and $\tau_\alpha\ge 0$,
then the trivial solution is
asymptotically stable in the region $\beta<1-\alpha$, $\tau\ge 0.$
\end{prop}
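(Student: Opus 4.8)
The plan is to locate every curve in the $(\beta,\tau_\beta)$ plane on which the $m=1$ characteristic equation \eqref{char_eqn} has a root on the imaginary axis, to verify that none of them penetrates the open region $\beta<1-\alpha$, and then to invoke continuity of the roots to conclude that the number of roots with positive real part is constant, hence zero, throughout that region. Since the coefficients of $D(\lambda)$ are real, its roots occur in complex-conjugate pairs, so the imaginary-axis crossings are exhausted by two families: (i) the zero root, which by setting $\lambda=0$ in \eqref{char_eqn} occurs precisely on the line $\beta=1-\alpha$; and (ii) pure imaginary roots $\lambda=i\omega$ with $\omega>0$, which by the derivation culminating in \eqref{taum1}--\eqref{beta_m1} must lie on the parametric curve defined there.

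The key observation is that, for $0<\alpha<1$, this pure-imaginary curve never meets $\beta<1-\alpha$. Indeed, because $\cos(\omega\tau_\alpha)\le 1$ and $0<\alpha<1$, the denominator in \eqref{beta_m1} satisfies $1-\alpha\cos(\omega\tau_\alpha)\ge 1-\alpha>0$, so in
\[ \beta=\bigl(1-\alpha\cos(\omega\tau_\alpha)\bigr)+\frac{\bigl(\omega+\alpha\sin(\omega\tau_\alpha)\bigr)^2}{1-\alpha\cos(\omega\tau_\alpha)} \]
both summands are nonnegative and the first is at least $1-\alpha$. Hence $\beta\ge 1-\alpha$ for every $\omega>0$, so the Hopf curve lies entirely in the closed half-plane $\beta\ge 1-\alpha$ irrespective of the sign of $\tau_\beta$; this is consistent with the limit $\beta\to 1-\alpha$ as $\omega\to 0$ recorded after \eqref{beta_m1}. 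Combined with family (i), this shows that the open region $R=\{(\beta,\tau_\beta):\beta<1-\alpha,\ \tau_\beta\ge 0\}$ contains no imaginary-axis roots whatsoever.

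To finish, I would anchor the root count at a point of $R$ where stability is already established. The segment $\beta=0$, $\tau_\beta\ge 0$ lies in $R$ (since $0<1-\alpha$), and there $D(\lambda)=\lambda+1-\alpha e^{-\lambda\tau_\alpha}$ is independent of $\tau_\beta$ and, by the result cited from \cite{CampbellJessop09}, has all roots with negative real part when $|\alpha|<1$; alternatively Theorem~\ref{rouche} gives stability at any point with $|\beta|<1-\alpha$. Because $R$ is convex, hence connected, the roots vary continuously with $(\beta,\tau_\beta)$, and none crosses the imaginary axis inside $R$, the number of roots with $\mathrm{Re}(\lambda)>0$ is constant on $R$ and therefore equals zero, yielding asymptotic stability. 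The main obstacle is precisely this continuation step: one must rule out a root with positive real part \emph{entering from} or \emph{escaping to} infinity as the parameters move within $R$, so that crossings of the imaginary axis are genuinely the only mechanism changing the count. This is where the hypothesis that $D(\lambda)$ is analytic in $\mathrm{Re}(\lambda)>-a$, together with the fact that for this retarded-type characteristic function the real parts of the roots are bounded above and only finitely many lie in any half-plane $\mathrm{Re}(\lambda)\ge c$, must be invoked to legitimize the argument.
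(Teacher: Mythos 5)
Your proposal is correct and follows essentially the same route as the paper: the paper's proof likewise reads off from \eqref{beta_m1} that $\beta\ge 1-\alpha$ on the pure-imaginary-root curve when $0<\alpha<1$, concludes that this curve (and the zero-root line $\beta=1-\alpha$) never enters the region $\beta<1-\alpha$, $\tau_\beta\ge0$, and then continues the known stability (Theorem~\ref{rouche}) throughout that region. Your version is simply more explicit about the continuation step — anchoring the root count at $\beta=0$ and noting that the analyticity of $D$ in $\mathrm{Re}(\lambda)>-a$ and the finiteness of roots in any right half-plane are what legitimize counting only imaginary-axis crossings — details the paper leaves implicit.
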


\begin{proof}
In this case, from equation \eqref{beta_m1} $\beta\ge 1-\alpha>0$.
Thus the curve $(\beta,\tau_\beta)$ defined by \eqref{taum1}-\eqref{beta_m1}
can only lie in
Quadrants I or IV. Since in Quadrant I the curve lies to the right of the
curve $\beta=1-\alpha$, by Theorem~\ref{instabthm}
it won't affect the stability and the result follows.
\end{proof}

\begin{prop}\label{m1P2}
If $m=1,$ $-1<\alpha<0$  and $\tau_\alpha\le-1/\alpha$, then
the trivial solution is
asymptotically stable in the region $\beta<1-\alpha$, $\tau\ge 0$.
\end{prop}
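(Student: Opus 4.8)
The plan is to mirror the strategy of Proposition~\ref{m1P1}: show that no stability boundary penetrates the target region $\{\beta<1-\alpha,\ \tau_\beta>0\}$, and then pin down the count of unstable roots by continuation from a point already known to be stable. The wrinkle relative to Proposition~\ref{m1P1} is that for $\alpha<0$ I can no longer read off $\beta\ge 1-\alpha$ from \eqref{beta_m1} (the first term $1-\alpha\cos(\omega\tau_\alpha)$ can dip below $1-\alpha$), so instead I will control the pure-imaginary-eigenvalue curve through its second coordinate $\tau_\beta$.

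First I would record two sign facts under the hypotheses $-1<\alpha<0$ and $\tau_\alpha\le -1/\alpha$. Rewriting the latter as $-1/\tau_\alpha\le\alpha\le 0$ places $\xi=\alpha$ in case~(1) of Lemma~\ref{Ssignlem}, so that $\omega+\alpha\sin(\omega\tau_\alpha)>0$ for all $\omega>0$. Since $|\alpha|<1$ also gives $1-\alpha\cos(\omega\tau_\alpha)>0$, the quotient $h(\omega)=-\frac{\omega+\alpha\sin(\omega\tau_\alpha)}{1-\alpha\cos(\omega\tau_\alpha)}$ is strictly negative for every $\omega>0$. By \eqref{taum1} this forces $\tau_\beta=h(\omega)/\omega<0$ along the entire curve of pure imaginary roots; that is, the curve lies wholly in the half-plane $\tau_\beta<0$ and never meets the physical region $\tau_\beta>0$. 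The limit \eqref{taulimit} is consistent with this, equalling $-\tfrac{1+\alpha\tau_\alpha}{1-\alpha}\le 0$.

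With the pure-imaginary branch excluded, I would argue that the open region $R=\{\beta<1-\alpha,\ \tau_\beta>0\}$ contains no parameter value at which \eqref{char_eqn} has a root on the imaginary axis: a zero root requires $\beta=1-\alpha$ (the boundary of $R$), and a root $i\omega$ with $\omega>0$ requires $(\beta,\tau_\beta)$ to lie on the curve just shown to sit in $\tau_\beta<0$. Because the roots of \eqref{char_eqn} depend continuously on $(\beta,\tau_\beta)$ and $D$ is analytic in $\mathrm{Re}(\lambda)>-a$, the number of roots with positive real part is locally constant on $R$ and can change only when a root crosses the imaginary axis. Hence this number is constant along the horizontal segment joining any $(\beta_0,\tau_{\beta,0})\in R$ to $(0,\tau_{\beta,0})$, on which $\tau_\beta$ is fixed and $\beta$ stays below $1-\alpha$. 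At $\beta=0$ we have $|\alpha|+|\beta|=|\alpha|<1$, so Theorem~\ref{rouche} gives asymptotic stability, i.e.\ zero roots with positive real part. Therefore the count is zero throughout $R$, and the trivial solution is asymptotically stable there; the boundary case $\tau_\beta=0$ follows by the same reasoning as in Proposition~\ref{m1P1}.

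The main obstacle is the first step: establishing that the whole pure-imaginary branch lies in $\tau_\beta<0$. This is precisely where the hypothesis $\tau_\alpha\le -1/\alpha$ is used, via Lemma~\ref{Ssignlem}, to keep the numerator of $h$ of one sign for every $\omega>0$; without it one could not rule out the branch re-entering $\tau_\beta>0$ and cutting across $R$. The continuation argument that follows is routine, resting only on continuous dependence of the spectrum (already underlying the Rouch\'e counting in Theorem~\ref{rouche}) and on the stable seed point supplied by Theorem~\ref{rouche}.
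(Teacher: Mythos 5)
Your argument is correct and follows essentially the same route as the paper: the hypotheses place $\alpha$ in case~(1) of Lemma~\ref{Ssignlem}, forcing $h(\omega)<0$ and hence $\tau_\beta<0$ along the entire pure-imaginary branch, so that branch never enters the physical region and stability propagates from the seed region given by Theorem~\ref{rouche}. Your write-up merely makes the final continuation step more explicit (and omits the paper's auxiliary observation that $\beta>1+\alpha>0$ confines the curve to Quadrants I or IV, which is not needed once $\tau_\beta<0$ is known), so no substantive difference.
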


\begin{proof}
In this case, from equation \eqref{beta_m1} $\beta>1+\alpha>0$.  So the curve
can only lie in Quadrant I or IV.  By Lemma~\ref{Ssignlem} $h(\omega)<0$ for
all $\omega>0$, which implies that $\tau_\beta<0$ for all $\omega>0$, i.e.,
the curve lies in Quadrant IV. Hence it does not affect the stability.
The result then follows from Theorems~\ref{instabthm} and \ref{rouche}.
\end{proof}
The results of Propositions~\ref{m1P1}-\ref{m1P2} are shown in Figure
\ref{m1stabfig}(a).
\begin{figure}[t!]
\centering
\begin{subfigure}[t]{0.49\textwidth}
\centering
\caption{$0\le \alpha<1,$ $\tau_\alpha\ge 0\ $ or
$\ -1<\alpha<0,\ \tau_\alpha\le -1/\alpha$}
\includegraphics[scale=.4]{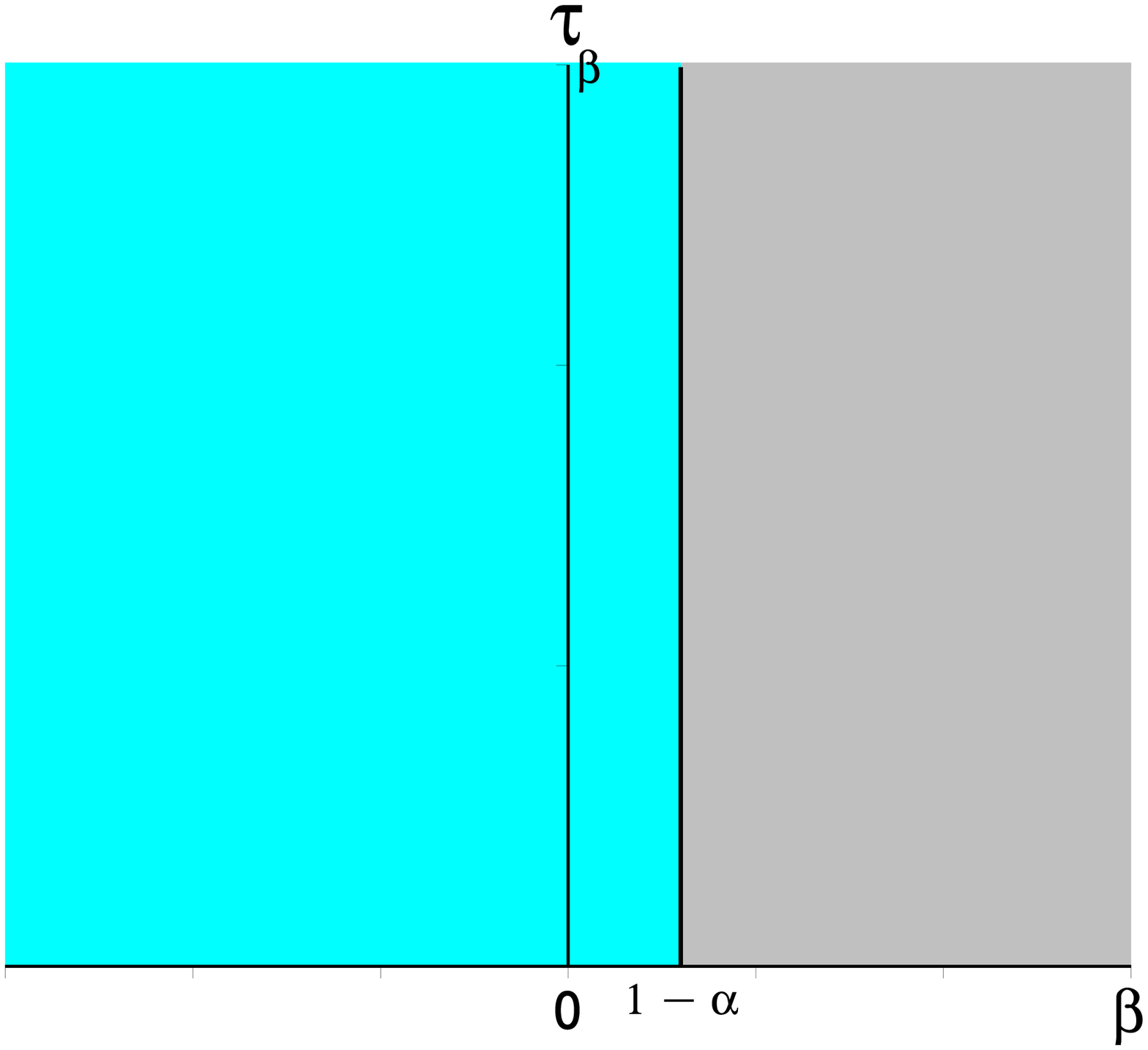}
\end{subfigure}~
\begin{subfigure}[t]{0.49\textwidth}
\centering
\caption{$-1<\alpha<0,\ \tau_\alpha>-1/\alpha$}
\includegraphics[scale=.4]{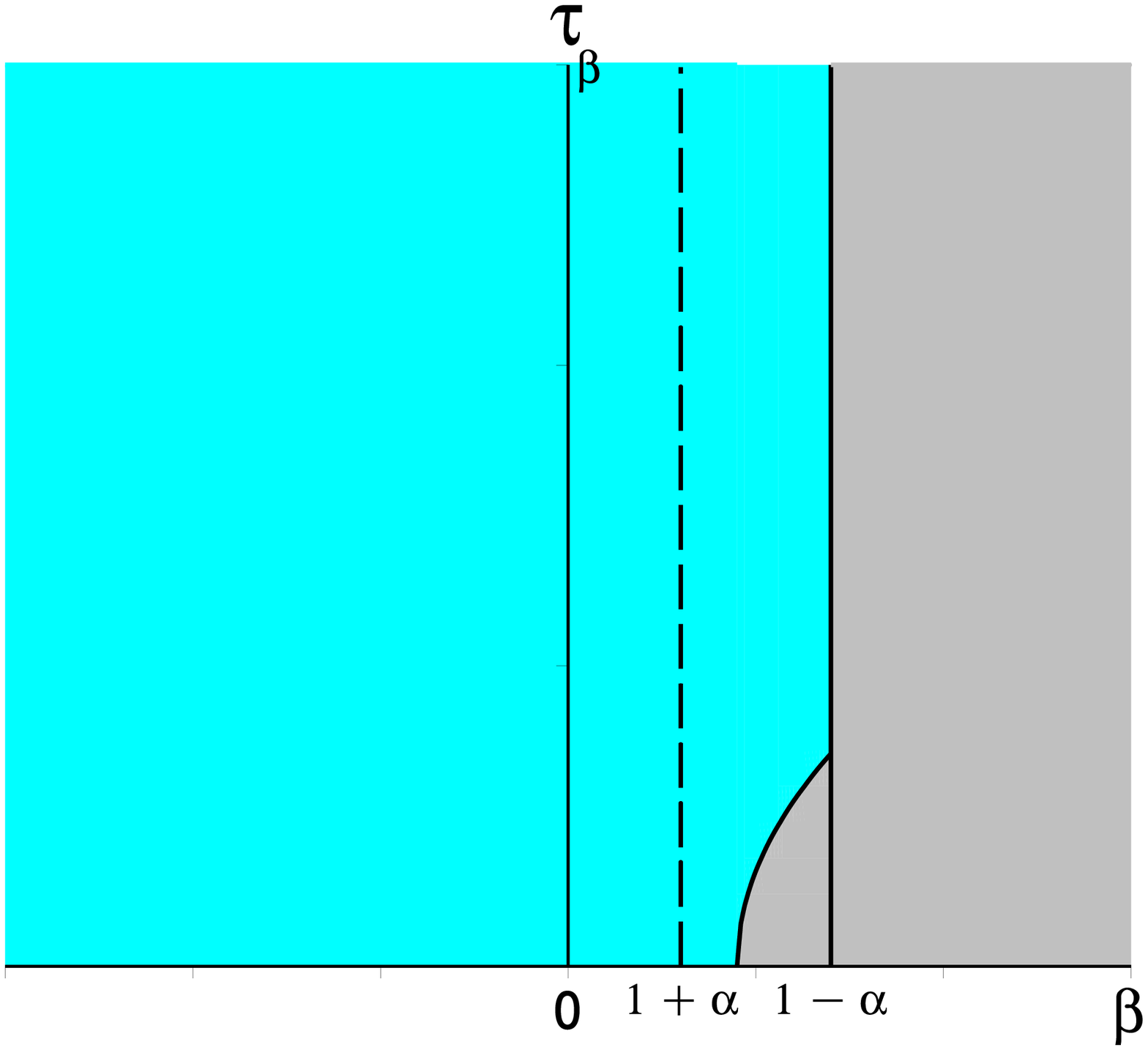}
\end{subfigure}
\caption{Exact stability regions for $m=1$ and $|\alpha|<1$.
Regions shaded in grey correspond to instability.
Regions shaded in cyan correspond to asymptotic stability}
\label{m1stabfig}
\end{figure}

Proposition~\ref{m1P2} requires $|\alpha\tau_\alpha|$ small enough for stability.
If $\tau_\alpha>-1/\alpha$ then $\tau_\beta>0$ as $\omega\rightarrow 0$ and
part of the curve lies in Quadrant I starting on the line $\beta=1-\alpha$ and
ending the right of the line $\beta=1+\alpha$.  Consideration of
the results of the previous section shows that the stability region lies
outside this curve.  See Figure~\ref{m1stabfig}(b).

When $|\alpha|>1$ the curve has discontinuities when
$1-\alpha\cos(\omega\tau_\alpha)=0$ and the stability boundary becomes
more complex. However, we can say the following.
\begin{prop}\label{m1P3}
If $m=1$, $\alpha>1$ and $\tau_\alpha<u^*/\alpha$, then the trivial solution
is unstable above the curve defined by
equations \eqref{taum1}--\eqref{beta_m1} for $ 0\le\omega\le\omega^*$
where $u^*$ is defined in Lemma~\ref{Ssignlem} and $\omega^*$ is
the smallest positive zero of $1-\alpha\cos(\omega\tau_\alpha)$.
\end{prop}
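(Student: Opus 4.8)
The plan is to first pin down the geometry of the boundary curve in the $(\beta,\tau_\beta)$-plane and then track the number of characteristic roots in the right half-plane, combining Theorem~\ref{instabthm} with the crossing information encoded in Proposition~\ref{directionprop}.

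First I would show that the arc $0\le\omega\le\omega^*$ lies entirely in the second quadrant. On $(0,\omega^*)$ we have $\omega\tau_\alpha\in(0,\arccos(1/\alpha))\subset(0,\tfrac{\pi}{2})$, so $1-\alpha\cos(\omega\tau_\alpha)<0$ while $\omega+\alpha\sin(\omega\tau_\alpha)>0$; the latter positivity holds for all $\omega>0$ by Lemma~\ref{Ssignlem} once $\tau_\alpha<u^*/\alpha$. Feeding these signs into \eqref{beta_m1} (a sum of squares divided by the negative quantity $1-\alpha\cos(\omega\tau_\alpha)$) gives $\beta<0$, and into \eqref{taum1} gives $\tau_\beta>0$. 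Using the limit \eqref{taulimit} together with $\lim_{\omega\to0}\beta=1-\alpha$, the arc starts at the junction point $(1-\alpha,(1+\alpha\tau_\alpha)/(\alpha-1))$ on the zero-eigenvalue line $\beta=1-\alpha$, and as $\omega\to\omega^{*-}$ the vanishing denominator drives $(\beta,\tau_\beta)\to(-\infty,+\infty)$. I would also record that $\beta(\omega)$ runs from $1-\alpha$ down to $-\infty$, so that the arc is a decreasing graph $\tau_\beta=\Phi(\beta)$ and ``above the curve'' is unambiguous.

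Next I would anchor the root count. By Theorem~\ref{instabthm}, every point with $\beta>1-\alpha$ is unstable. On the segment of the line $\beta=1-\alpha$ with $\tau_\beta>(1+\alpha\tau_\alpha)/(\alpha-1)$ one has $D(0)=0$ together with $\partial_\lambda D(0)=1+\alpha\tau_\alpha+(1-\alpha)\tau_\beta<0$ and $D(\lambda)\to+\infty$ as $\lambda\to+\infty$, so $D$ already possesses a positive real root there. Since $\left.d\lambda/d\beta\right|_{\lambda=0}=1/\partial_\lambda D(0)<0$ on this segment, decreasing $\beta$ across the line pushes the zero root into the right half-plane; hence just to the left of the line and above the junction (which is precisely the part of the region above the arc adjacent to the line) the trivial solution carries a positive real characteristic root. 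Then I would propagate this to the whole region above the arc using the crossing direction: along the arc the sign of $\mbox{Re}(d\lambda/d\tau_\beta)$ equals that of the numerator $AC+BD$ in \eqref{real-part}, which by \eqref{derivative-sign-conditions} equals $-\,\mathrm{sgn}(d\beta^-/d\omega)$ because $\beta^-<0$. Since $\beta^-(\omega)$ decreases across $(0,\omega^*)$, I expect $\mbox{Re}(d\lambda/d\tau_\beta)>0$, so by Proposition~\ref{directionprop} crossing the arc in the direction of increasing $\tau_\beta$ creates a conjugate pair of roots with positive real part; the number of unstable roots just above the arc therefore exceeds the number just below by two and is at least two, which with the real root at the line gives instability above the arc.

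The main obstacle is the bookkeeping in this last step. For $\alpha>1$ the denominator $1-\alpha\cos(\omega\tau_\alpha)$ changes sign, so the full pure-imaginary locus has additional branches for $\omega>\omega^*$, and I must verify that none of these re-enters the region above the first arc in a way that could return the newly created roots to the left half-plane; equivalently, I must confirm that the first arc is the lowest pure-imaginary boundary bordering this region so the count cannot drop back below one. Making the sign $d\beta^-/d\omega<0$ rigorous on all of $(0,\omega^*)$ (i.e.\ establishing monotonicity of $\beta^-$, equivalently $AC+BD>0$ there) and controlling the higher branches is the delicate point; the hypothesis $\tau_\alpha<u^*/\alpha$, through Lemma~\ref{Ssignlem}, is exactly what keeps $\omega+\alpha\sin(\omega\tau_\alpha)$ one-signed and thereby pins down the location of this locus.
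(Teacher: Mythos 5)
Your argument follows essentially the same route as the paper's proof: Lemma~\ref{Ssignlem} with $\tau_\alpha<u^*/\alpha$ forces the branch $0\le\omega<\omega^*$ into Quadrant~II running from $(1-\alpha,\,(1+\alpha\tau_\alpha)/(\alpha-1))$ out to $(-\infty,+\infty)$, and instability above it is obtained from Proposition~\ref{directionprop} plus the claim that the remaining branches lie below — and the two delicate points you flag (the sign of $d\beta^-/d\omega$ on $(0,\omega^*)$ and the control of the higher branches) are exactly the two assertions the paper itself states without proof. Your explicit anchoring of the unstable root on the segment of the line $\beta=1-\alpha$ above the junction, via $D(0)=0$, $\partial_\lambda D(0)<0$ and $D(\lambda)\to+\infty$, is a useful detail the paper leaves implicit.
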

\begin{proof}
Let $S(\omega)$ be as defined in the proof of Lemma~\ref{Ssignlem}
and $C(\omega)=1-\alpha\cos(\omega\tau_\alpha)$.
Since $\tau_\alpha<u^*/\alpha$, $S(\omega)>0$ for $\omega>0$ thus the
curve defined by equations \eqref{taum1}--\eqref{beta_m1} will either lie
in Quadrant IV, in which case
it does not affect the stability, or in Quadrant II. The branch of the curve for
$\omega\in [0,\omega^*$, $C(\omega))<0$ lies in Quadrant II
and emanates from the line $\beta=1-\alpha$ when $\omega=0$.
As $\omega\rightarrow \omega^*$, $\tau_\beta\rightarrow\infty$
and $\beta\rightarrow -\infty$.  Using the results of the previous section
shows that stability is lost along this branch of the curve.  It can be shown
that all other branches of the curve in Quadrant II lie below this curve.
The result follows.
\end{proof}
\begin{prop}\label{m1P4}
If $m=1$, $\alpha<-1$ and $\tau_\alpha\le -1/(2\alpha)$, then is the trivial
solution is asymptotically stable for $1+\alpha\le\beta< 1-\alpha$.
\end{prop}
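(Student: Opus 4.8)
The plan is to locate the strip $1+\alpha\le\beta<1-\alpha$ relative to the two kinds of stability boundaries — the line $\beta=1-\alpha$, on which \eqref{char_eqn} has a zero root, and the parametric curve \eqref{taum1}--\eqref{beta_m1}, on which it has a pair of pure imaginary roots — and then to propagate the distribution-independent stability of Theorem~\ref{rouche2} across this strip by a root-counting continuity argument. Since $\alpha<-1$ we have $1+\alpha<0<1-\alpha$, so the strip straddles the $\tau_\beta$-axis. Setting $\lambda=0$ in \eqref{char_eqn} produces a zero root precisely when $\beta=1-\alpha$, and Theorem~\ref{instabthm} makes the solution unstable for $\beta>1-\alpha$; this identifies the right edge as a genuine boundary. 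Moreover Theorem~\ref{rouche2} already guarantees asymptotic stability, for every $\tau_\beta>0$, on the open strip $1+\alpha<\beta<-\alpha-1$, which sits inside the claimed region.

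It then remains to show that the pure-imaginary curve \eqref{taum1}--\eqref{beta_m1} does not intrude into $1+\alpha\le\beta<1-\alpha$ for $\tau_\beta>0$. First I would invoke Lemma~\ref{Ssignlem}: since $\tau_\alpha\le-1/(2\alpha)$ forces $-1<\alpha\tau_\alpha<0$, the function $S(\omega)=\omega+\alpha\sin(\omega\tau_\alpha)$ is positive for all $\omega>0$. By \eqref{taum1} this makes $\tau_\beta$ positive exactly on the $\omega$-intervals where $1-\alpha\cos(\omega\tau_\alpha)<0$; on each such interval both terms of \eqref{beta_m1} are negative, so $\beta<0$ and the curve lies in the second quadrant.

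The crux is to sharpen ``$\beta<0$'' to ``$\beta<1+\alpha$''. Writing $q=1-\alpha\cos(\omega\tau_\alpha)<0$, a short rearrangement of \eqref{beta_m1} gives
\[
\beta-(1+\alpha)=\frac{(\omega+\alpha\sin(\omega\tau_\alpha))^2-\alpha\,(1+\cos(\omega\tau_\alpha))\,q}{q},
\]
so, because $q<0$, it suffices to prove the numerator is positive. On the relevant branch $\cos(\omega\tau_\alpha)<1/\alpha<0$ forces $\omega\tau_\alpha>\pi/2$, and then $\tau_\alpha\le-1/(2\alpha)$ yields the lower bound $\omega>\pi|\alpha|$, whence $\omega+\alpha\sin(\omega\tau_\alpha)>(\pi-1)|\alpha|$ and $(\omega+\alpha\sin(\omega\tau_\alpha))^2>(\pi-1)^2\alpha^2$. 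Since on this branch $0\le 1+\cos(\omega\tau_\alpha)<1$ and $0<|q|\le|\alpha|-1$, the subtracted term $\alpha(1+\cos(\omega\tau_\alpha))q$ is nonnegative and of magnitude below $\alpha^2$, so the numerator exceeds $\big((\pi-1)^2-1\big)\alpha^2>0$. Thus $\beta<1+\alpha$ strictly, and the curve stays strictly to the left of the left edge of the claimed strip; the same bound applies to all higher branches, where $\omega$ is even larger.

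Finally I would assemble these facts. In the half-plane $\tau_\beta>0$ the only imaginary-axis crossings lie on $\beta=1-\alpha$ and on the curve just analysed, which has $\beta<1+\alpha$; hence the connected region $\{1+\alpha\le\beta<1-\alpha,\ \tau_\beta>0\}$ contains no such crossing and no root on the imaginary axis. As $D(\lambda)$ is analytic in $\mathrm{Re}(\lambda)>-a$, the number of roots with nonnegative real part is constant on this region, and it equals zero there because the region meets the stable strip of Theorem~\ref{rouche2}; the solution is therefore asymptotically stable throughout, as asserted. I expect the main obstacle to be the third step: securing the lower bound $\omega>\pi|\alpha|$ and hence the strict sign of the numerator, which is exactly what is needed to upgrade the nonstrict ``$\beta\le1+\alpha$'' — already implied by Theorem~\ref{rouche2}, since the curve cannot enter the open stable strip — to the strict inequality that brings the endpoint $\beta=1+\alpha$ into the stability region.
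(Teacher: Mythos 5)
Your proposal is correct, and it follows the same skeleton as the paper's (very terse) proof: use Lemma~\ref{Ssignlem} to get $S(\omega)=\omega+\alpha\sin(\omega\tau_\alpha)>0$, observe via \eqref{taum1}--\eqref{beta_m1} that the pure-imaginary curve lies in Quadrant IV (harmless) or Quadrant II, and then combine Theorems~\ref{instabthm} and \ref{rouche2} with a root-counting continuation argument. Where you genuinely go beyond the paper is the quantitative step in the middle: the paper stops at ``the curve lies in Quadrant II,'' which by itself does not settle the claim, since the portion $1+\alpha\le\beta<0$ of the asserted stability strip also lies in Quadrant II. Your rearrangement $\beta-(1+\alpha)=\bigl[S^2-\alpha(1+\cos(\omega\tau_\alpha))q\bigr]/q$ with $q=1-\alpha\cos(\omega\tau_\alpha)<0$, together with the lower bound $\omega>\pi|\alpha|$ forced by $\cos(\omega\tau_\alpha)<1/\alpha$ and $\tau_\alpha\le-1/(2\alpha)$, yields the strict inequality $\beta<1+\alpha$ on every Quadrant~II branch (the estimates $S^2>(\pi-1)^2\alpha^2$, $|\alpha(1+\cos)q|<\alpha^2$ all check out). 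This buys two things the paper's argument does not explicitly deliver: it shows the curve cannot intrude into $[1+\alpha,0)$, and the strictness is exactly what is needed to include the endpoint $\beta=1+\alpha$ in the region of \emph{asymptotic} stability, since Theorem~\ref{rouche2} alone only covers the open strip $|\beta|<-\alpha-1$ and a non-strict bound would leave open the possibility of a pure imaginary root at $\beta=1+\alpha$. In short: same route, but you supply a sharp estimate that the paper's two-line proof implicitly assumes.
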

\begin{proof}
Let $S(\omega)$ be as defined in the proof of Lemma~\ref{Ssignlem}
and $C(\omega)=1-\alpha\cos(\omega\tau_\alpha)$. Since $\tau_\alpha\le -1/\alpha$,
 $S(\omega)>0$ for $\omega>0$ thus the
curve defined by equations \eqref{taum1}--\eqref{beta_m1} will either lie
in Quadrant IV, in which case it does not affect the stability, or in Quadrant II.
Consideration of Theorems~\ref{instabthm} and ~\ref{rouche2} gives the result.
\end{proof}
The results of these Propositions are shown in Figure~\ref{m1stabfig2}.
In the white regions of this Figure, the curve defined by \eqref{taum1}--\eqref{beta_m1} will have an infinite number of branches defined by the $\omega$ values
where $1-\alpha\cos(\omega\tau_\alpha)>0$.  Inside these branches
trivial solution will be unstable.
\begin{figure}[t!]
\centering
\begin{subfigure}[t]{0.5\textwidth}
\centering
\caption{$\alpha>1$, $ \tau_\alpha\le u^*/\alpha$}
\includegraphics[scale=.4]{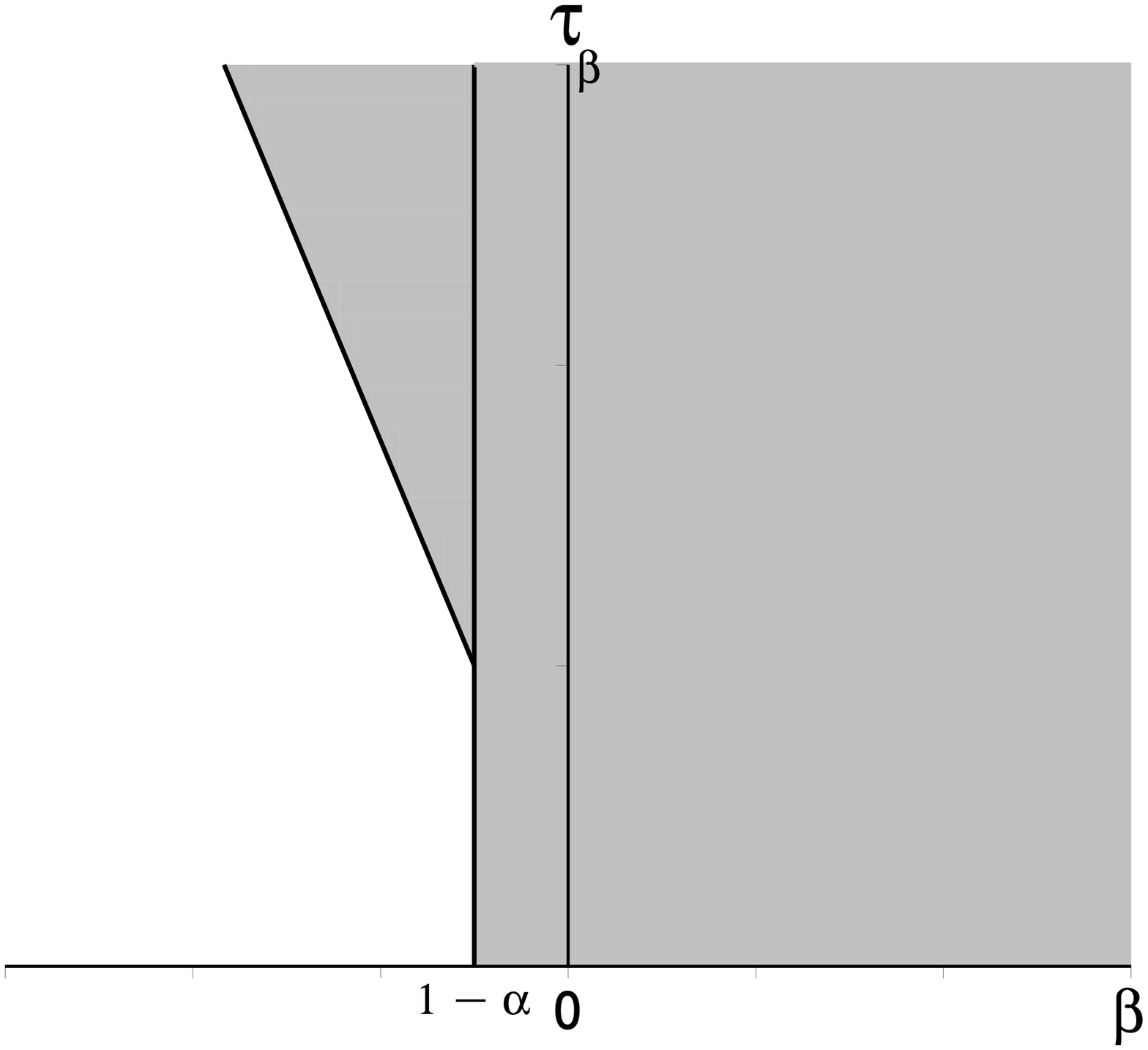}
\end{subfigure}~
\begin{subfigure}[t]{0.5\textwidth}
\centering
\caption{$\alpha<-1,\ \tau_\alpha\le -1/(2\alpha)$}
\includegraphics[scale=.4]{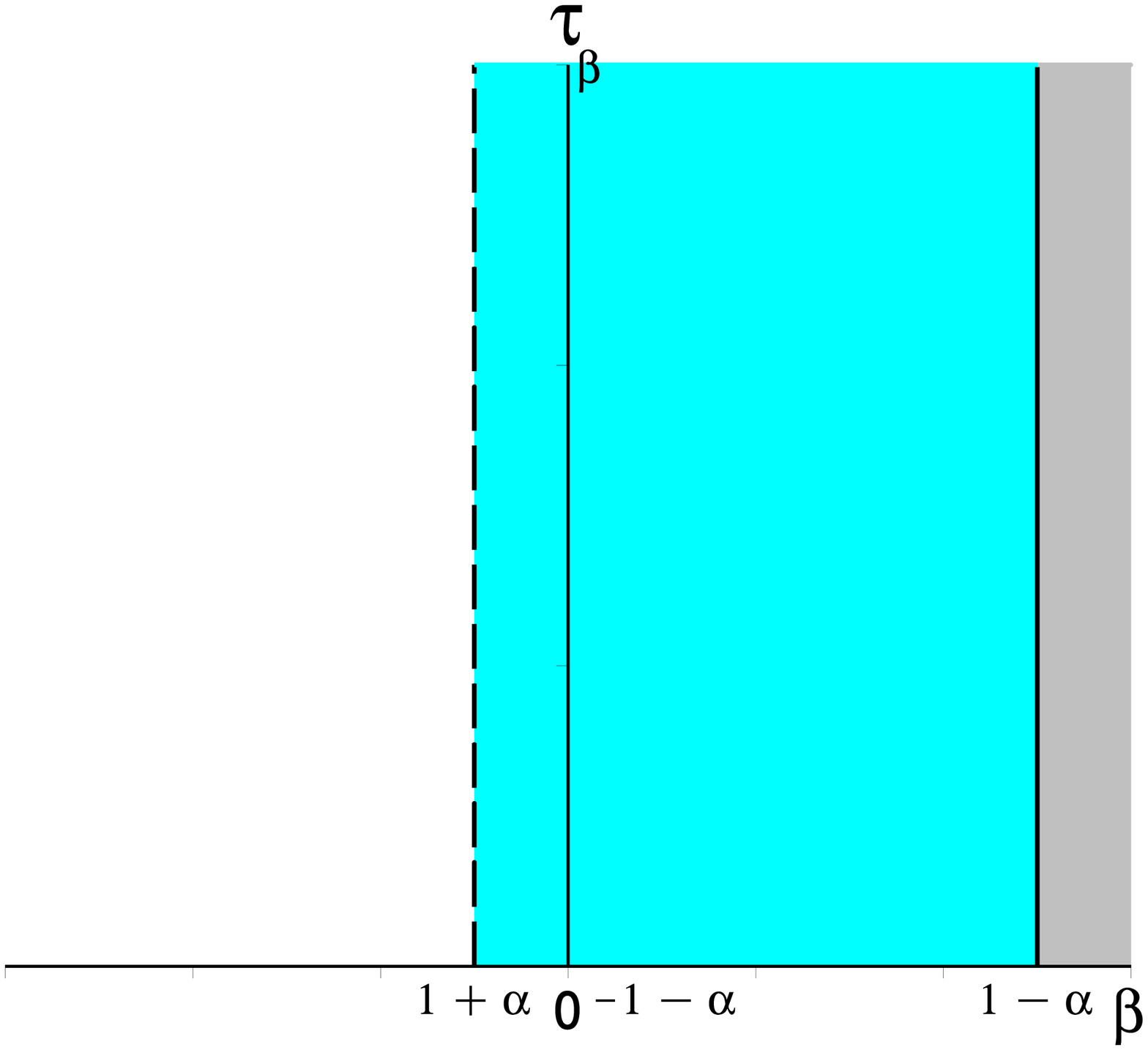}
\end{subfigure}
\caption{Stability regions for $m=1$ and $|\alpha|>1$.
Regions shaded in grey correspond to instability. Regions shaded
in cyan correspond to asymptotic stability. In the white regions there
may be stability or instability depending on the parameter values.
See text.}
\label{m1stabfig2}
\end{figure}
\subsection{The case $m>1.$}
Our first result shows how Proposition~\ref{m1P1} generalises to
higher values of $m$.
\begin{prop}\label{m345prop}
If $\,0<\alpha<1$ then the right boundary of the region of asymptotic
stability of the trivial solution is the line $\beta=1-\alpha$.
If, in addition, $m=2,3,4,5$, then the left boundary is
the curve $(\beta^-(\omega,0),\tau_\beta^-(\omega,0)$.
\end{prop}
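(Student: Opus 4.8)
The plan is to combine the distribution-independent results of Theorems~\ref{instabthm} and \ref{rouche} with the curve bookkeeping of Table~\ref{lvalues} and the crossing-direction relationship \eqref{derivative-sign-conditions}. Throughout I would use that for $0<\alpha<1$ one has $1-\alpha>0$, and that, because $D(\lambda)$ in \eqref{char_eqn} is analytic in $\mbox{Re}(\lambda)>-a$, its roots vary continuously with $(\beta,\tau_\beta)$; hence the number of roots with positive real part can change only across a curve on which \eqref{char_eqn} has a root on the imaginary axis, namely either the line $\beta=1-\alpha$ (a zero root) or one of the pure-imaginary curves $(\beta^\pm(\omega,l),\tau_\beta^\pm(\omega,l))$.

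For the right boundary I would argue as follows. By Theorem~\ref{instabthm} every point with $\beta>1-\alpha$ is unstable, while by Theorem~\ref{rouche} every point with $|\beta|<1-\alpha$ is asymptotically stable for all $\tau_\beta>0$. In particular no pure-imaginary curve can enter the open strip $|\beta|<1-\alpha$, since a point on such a curve is not asymptotically stable. Because $\beta^+>0$ along every curve in \eqref{positive_cos}, this forces $\beta^+(\omega,l^+)\ge 1-\alpha$, so all first-quadrant curves lie in the region already known to be unstable. Thus the stable region abuts the line $\beta=1-\alpha$ on its right and this line is the right boundary for every $\tau_\beta>0$.

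For the left boundary I would first invoke Table~\ref{lvalues}: for $m=2,3,4,5$ the only value of $l^-$ producing a curve with $\tau_\beta>0$ is $l^-=0$, while for $m=4,5$ a single first-quadrant curve $l^+=1$ also appears and for $m=2,3$ there is none. Hence $(\beta^-(\omega,0),\tau_\beta^-(\omega,0))$ is the unique pure-imaginary curve in Quadrant II. Since $\beta^-<0$ there, the same no-curve-in-the-strip argument gives $\beta^-(\omega,0)\le\alpha-1$, so the curve sits to the left of the stable strip. Starting inside the strip, where the number of roots with positive real part is $0$, and decreasing $\beta$ at fixed $\tau_\beta$, the first (and only) bifurcation encountered is this curve; I would then use \eqref{relationship-between-derivatives}, \eqref{derivative-sign-conditions} and Proposition~\ref{directionprop} to check that crossing it to the left raises the count from $0$ to $2$, with no further curve available to restore stability. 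This identifies $(\beta^-(\omega,0),\tau_\beta^-(\omega,0))$ as the left boundary.

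The main obstacle is the crossing-direction bookkeeping on the left: one must determine the sign of $\mbox{Re}(d\lambda/d\tau_\beta)$ consistently along the whole curve, equivalently control the monotonicity of $\beta^-(\omega,0)$ in $\omega$ through \eqref{relationship-between-derivatives}, and rule out re-entrant behaviour so that the entire region between the strip and the curve carries exactly $0$ unstable roots. For $m=4,5$ one must additionally confirm that the extra curve $(\beta^+(\omega,1),\tau_\beta^+(\omega,1))$, which lies in $\{\beta\ge 1-\alpha\}$ by the right-boundary argument above, is buried in the already-unstable region and therefore does not generate a competing boundary.
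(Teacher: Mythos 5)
Your proposal follows essentially the same route as the paper's proof: Theorems~\ref{instabthm} and \ref{rouche} pin down instability for $\beta>1-\alpha$ and stability in the strip $|\beta|<1-\alpha$, the bound $\beta^+(\omega,l^+)\ge 1-\alpha$ disposes of the first-quadrant curves, and Table~\ref{lvalues} identifies $(\beta^-(\omega,0),\tau_\beta^-(\omega,0))$ as the unique second-quadrant curve and hence the left boundary. The crossing-direction bookkeeping you flag as the remaining obstacle is not actually carried out in the paper either---the authors simply conclude that the unique Quadrant II curve must form the left boundary---so your plan is, if anything, slightly more explicit about what remains to be verified.
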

\begin{proof}
Recall that Theorem~\ref{instabthm} established that the trivial solution is
unstable in the region $\beta>1-\alpha$ and that Theorem~\ref{rouche} showed
that the trivial solution is asymptotically stable for $|\beta|<1-|\alpha|$,
$\tau_\beta\ge 0$. This stability will be maintained until parameter values
where the characteristic equation has a root with zero real part, which occurs
along the line $\beta=1-\alpha$ or along the curves defined by
\eqref{positive_cos} or \eqref{negative_cos}.

Since  $\,0<\alpha<1$, it follows from Theorem~\ref{rouche} that the
trivial solution is asymptotically stable for $|\beta|<1-\alpha$,
which gives the first result. Note that this is also confirmed by the fact
that, for $0<\alpha<1$, the curves \eqref{positive_cos} satisfy
\[\beta^+(\omega,l^+)\ge (1-\alpha)\]
for all $l^+$ and $m=1,2,\ldots$. That is, these curves
lie to the right of the line $\beta=1-\alpha$.

For $m=2,3,4,5$, from Table~\ref{lvalues} there is one curve of
\eqref{negative_cos} lying in the second quadrant,
 $(\beta^-(\omega,0),\tau_\beta^-(\omega,0)$.  Thus this curve must
form the left boundary of the stability region.
\end{proof}
The results of Proposition~\ref{m345prop} are illustrated in
Figure~\ref{m345fig}.
\begin{figure}
\begin{center}
 \includegraphics[scale=.4]{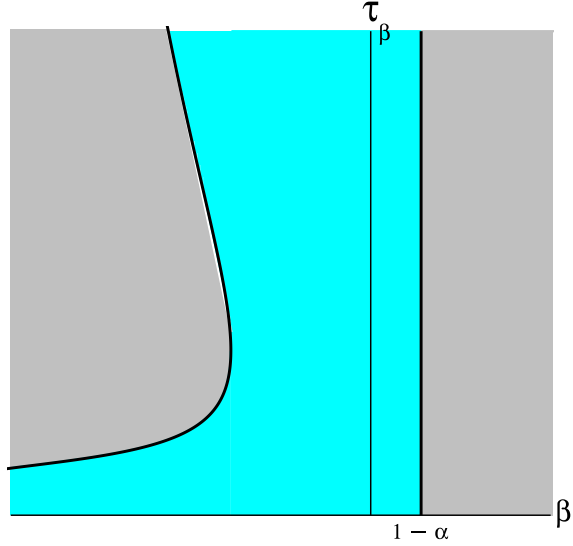}\\
\end{center}
\caption{Stability region for $m=2,3,4,5$ with
 $0<\alpha<1$.}
\label{m345fig}
\end{figure}
Once one has determined the curves where the characteristic equation
has pure imaginary eigenvalues, the stability region can be determined
by using Theorems~\ref{instabthm}-\ref{rouche2} and Proposition~\ref{directionprop}.
It is possible to make general statements about the stability region under
conditions other than those of Proposition~\ref{m345prop}, but such statements
become more complicated as the index $m$ increases.  Instead, we will make some observations
about various cases and illustrate them with examples.

Proposition~\ref{m345prop} may be generalised to higher values of $m$, but the
left boundary of the stability region may consist of multiple curves; see Figure~\ref{Otherparamfig}(a) for an example.
It appears that Proposition~\ref{m345prop} has an analogue for
$-1<\alpha<0$ and $\tau_\alpha$ sufficiently small. See Figure~\ref{bifurcation_curves1},
where in all cases, the stability region is schematically the same as depicted in Figure~\ref{m345fig}.
However, for large enough $m$ or $\tau_\alpha$, this is no longer true; see
Figure~\ref{Otherparamfig}(b) for an example.

Our results for $m>1$ are all for $|\alpha|\le 1$. In the case that $|\alpha|>1$,
the expressions for $\beta^\pm$ and $\tau_\beta^\pm$ are as given in
\eqref{positive_cos}-\eqref{negative_cos}. However, the expressions for $\theta^{\pm}$ are modified to
\[
\theta^{\pm}(\omega,l^{\pm})=
\left\{
\begin{array}{lcl}
\frac{1}{m}\Arctan[h(\omega)]+2l\pi\;&,&\;\mbox{if}\;\; \pm(1-\alpha\cos\tau_\alpha) \ge 0\;, \\
\frac{1}{m} \Arctan[h(\omega)]+(2l+1)\pi\;&,&\;\mbox{if}\;\; \pm(1-\alpha\cos\tau_\alpha) < 0\;.
\end{array}
\right.
\]
Using these expressions we can explore the stability region further.
It does appear that Proposition~\ref{m1P3} may be generalised to higher values of $m$,
but possibly with a more complicated left boundary of the stability region. This scenario is exemplified in
Figure~\ref{Otherparamfig}(c). One can use the fact \cite{Yuan_Campbell04} that when $\beta=0$,
the trivial solution is asymptotically stable if $\alpha<-1$ and
\[\tau_\alpha<\frac{1}{\sqrt{\alpha^2-1}}\arccos\left(\frac{1}{\alpha}\right)=\tau_{crit}. \]
In this situation, the boundary of the stability region will be formed by the parts of the curves 
of pure imaginary eigenvalues and the line $\beta=1-\alpha$ closest to the
$\beta$ axis. An example of this is shown in Figure~\ref{Otherparamfig}(d), where $-1/\alpha<\tau_\alpha<\tau_{crit}$.
\begin{figure}[t!]
\centering
\begin{subfigure}[t]{0.5\textwidth}
\centering
\caption{$m=100,\ \alpha=0.95,\ \tau_\alpha=4.6$}
 \includegraphics[scale=.4]{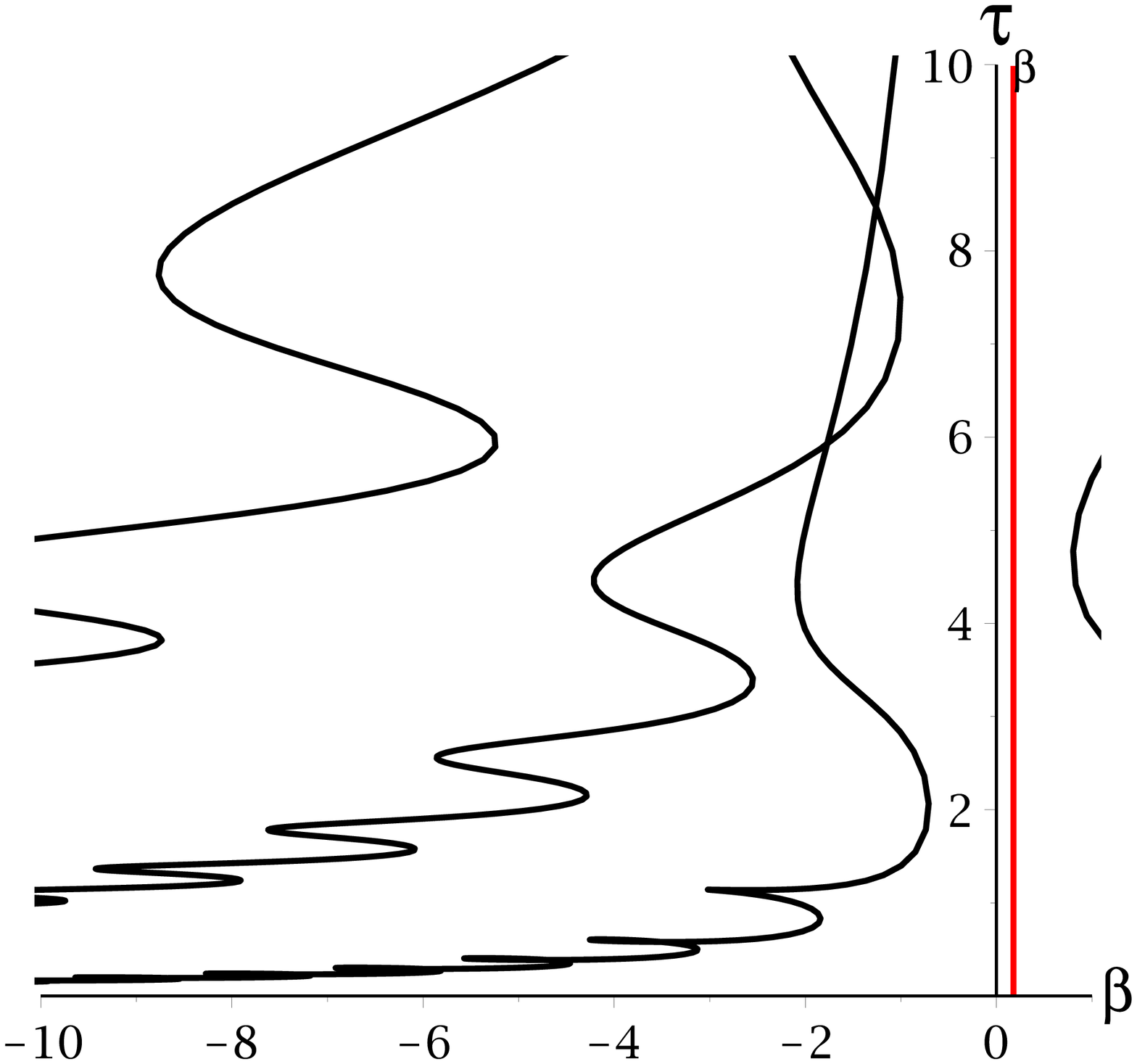}
\end{subfigure}~
\begin{subfigure}[t]{0.5\textwidth}
\centering
\caption{$m=100,\ \alpha=-0.95,\ \tau_\alpha=1$}
 \includegraphics[scale=.4]{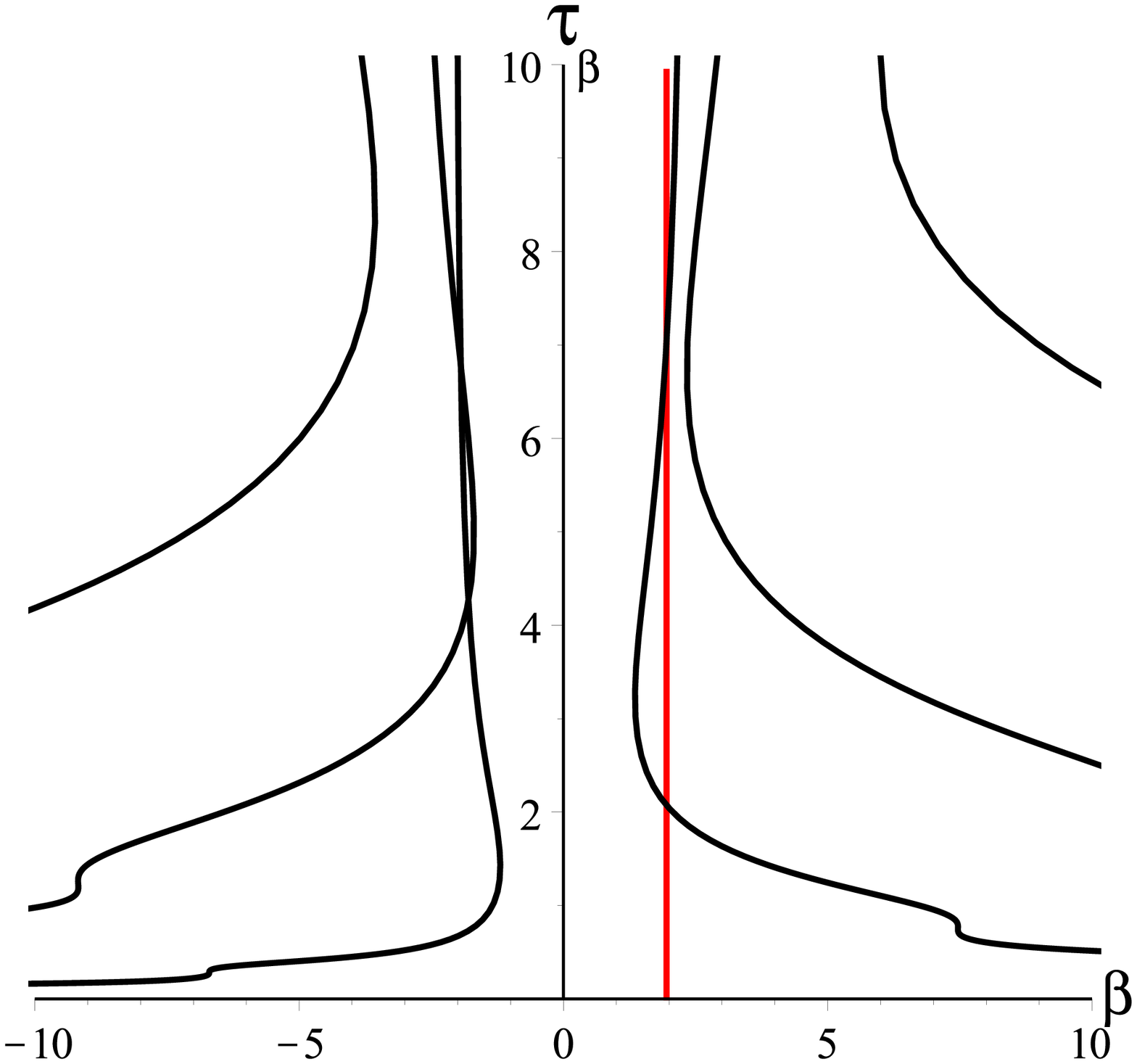}
\end{subfigure}
\begin{subfigure}[t]{0.5\textwidth}
\centering
\caption{$m=7,\ \alpha=1.5,\ \tau_\alpha=3$}
 \includegraphics[scale=.4]{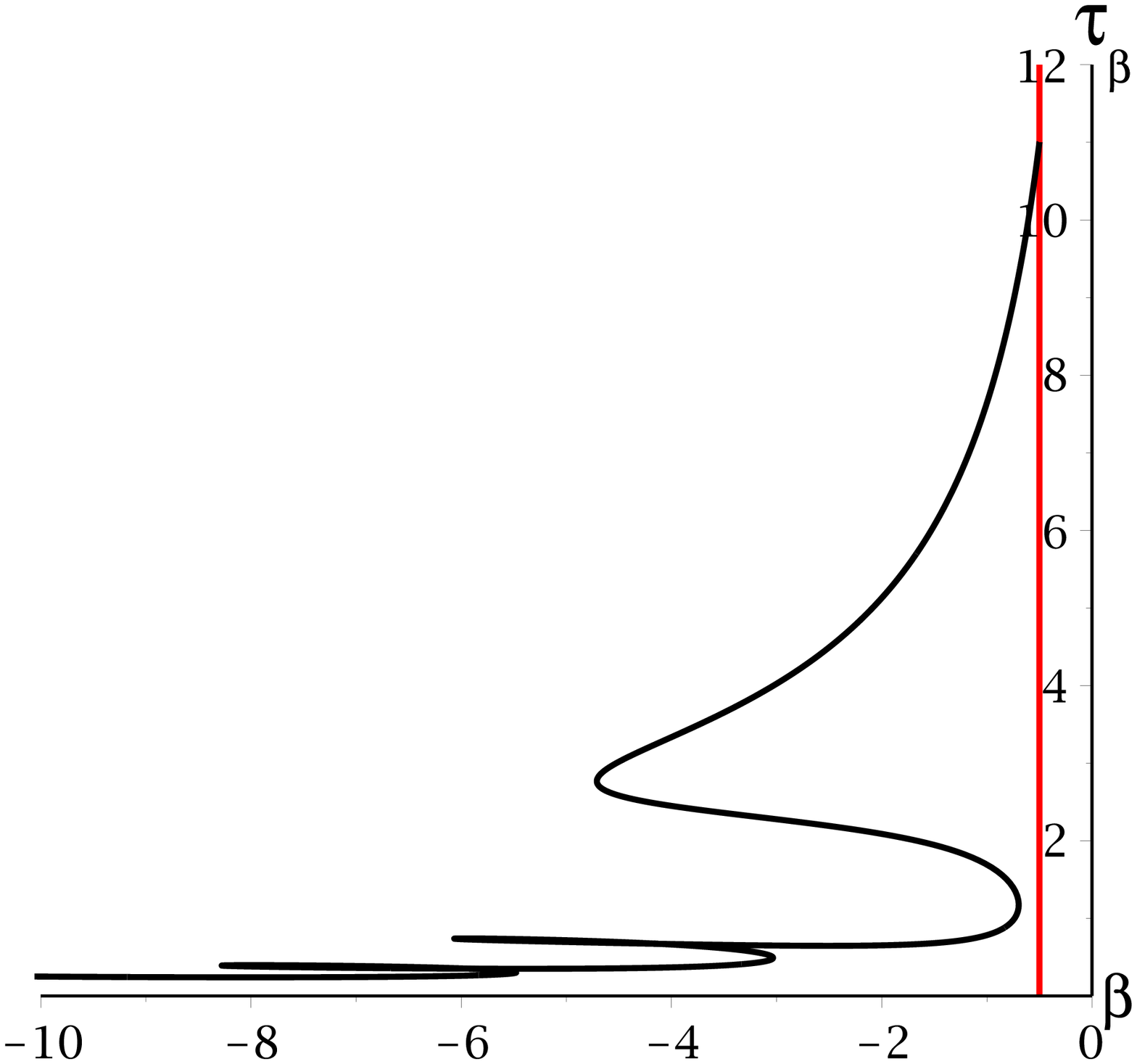}
\end{subfigure}~
\begin{subfigure}[t]{0.5\textwidth}
\centering
\caption{$m=7,\ \alpha=-1.2,\ \tau_\alpha=3$}
 \includegraphics[scale=.4]{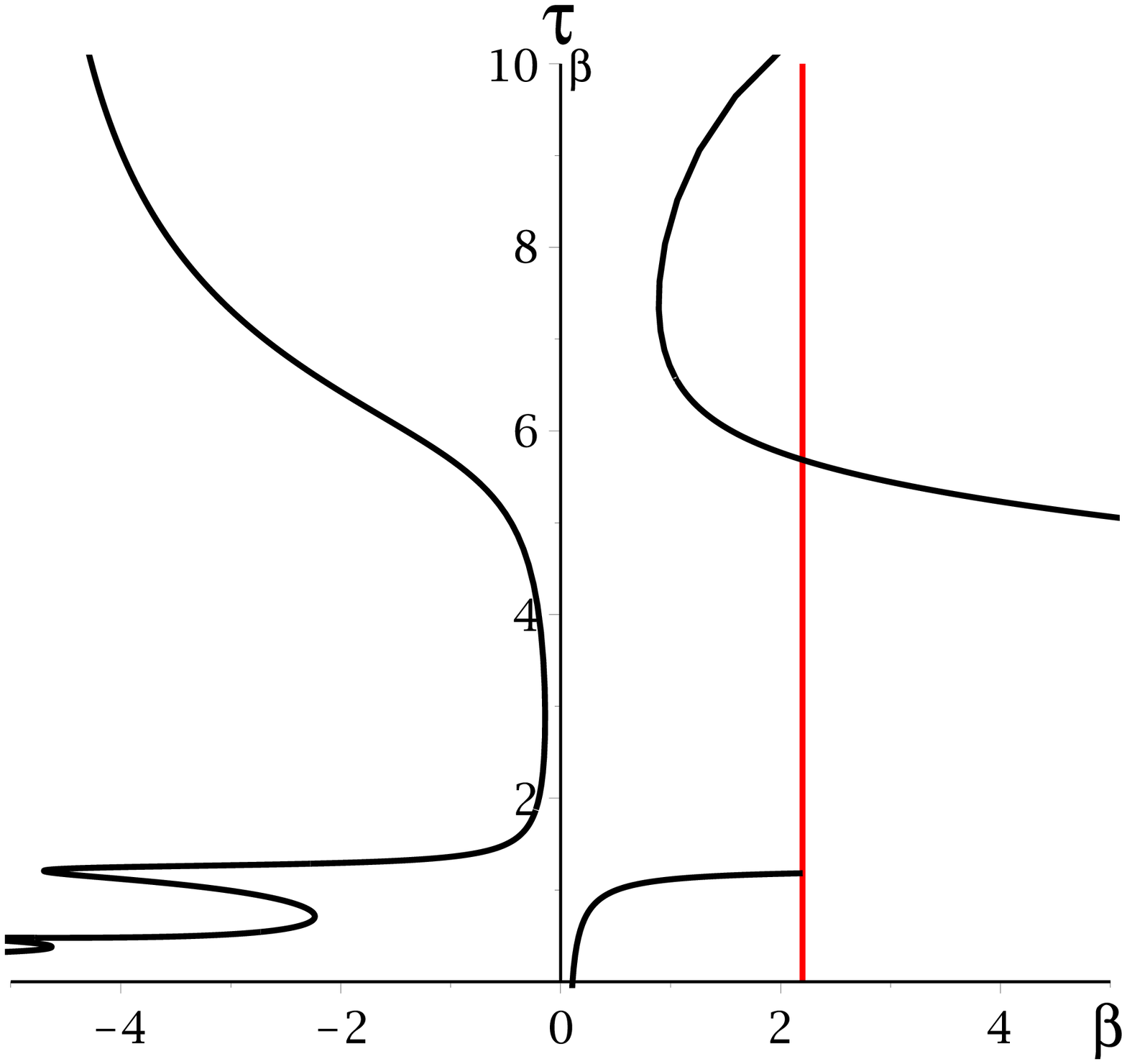}
\end{subfigure}
\caption{Region of stability of the trivial solution for other parameter values.
The boundary of the stability region is made up of the parts of the black curves
and the red line which are
closest to the $\tau_\beta$ axis. In (a) and (b) different
curves correspond to different values of $l$ in the curve equations.}
\label{Otherparamfig}
\end{figure}

\section{Conclusions}
We have studied the stability of the trivial solution of a linear, scalar delay
differential equation with two distributed time delays. We first gave {\em distribution-independent}
conditions for stability and instability. These conditions are identical to the
delay-independent conditions given for a scalar equation with two discrete time delays
\cite{Hale_Huang_1,Mah_etal_1}, and so our work generalises this result.

We then considered the case in which one time delay was discrete and the other was gamma distributed. We determined the stability
region in the parameter space of the strength of the gamma distributed term
and the mean of the gamma distributed time delay and considered how this region evolves as
the parameters of the discrete time delay term are varied.  Our results show that as
the parameter $m$ in the gamma distribution gets larger, the stability region
shrinks and the boundary of the stability region becomes more complicated.
More detail is given below. Our work generalises that of \cite{incube2013a}
which only considered the case $m=1$.


It is important to note that the parameter $m$ in the gamma distribution is a measure of the
variance of the distribution, with smaller $m$ yielding a distribution more tightly clustered around the mean.
In fact, if the mean of the distribution, $\tau_\alpha$, is held fixed, then in the
limit as $m\rightarrow\infty$ the gamma distribution approaches a Dirac distribution. In other words,
the time delay becomes discrete \cite{MacD_1}; see Figure~\ref{comparefig}(a). It has been
shown for systems with a single gamma distributed time delay that some results for the distributed
time delay approach those of the discrete time delay in the limit as $m\rightarrow\infty$
\cite{Cook_Gross,MacD_1,WXW,WXR}. We observe the same phenomenon in our model with
one discrete time delay and one gamma distributed time delay. Consideration of the
following limits
\[
\lim_{m\rightarrow\infty} m\tan\left(\frac{u}{m}\right)
=u, \qquad
\lim_{m\rightarrow\infty} \sec\left(\frac{u}{m}\right)^m
=1
\]
shows that as $m\rightarrow\infty$, the equations for the curves where the
characteristic equation \eqref{char_eqn} has a pair of pure imaginary roots reduce to
those for the case of two discrete time delays. These expressions are given by \cite{Yuan_Campbell04}:
\[ \beta^{\pm}(\omega) = \pm
\sqrt{[1-\alpha\cos(\omega \tau_\alpha)]^2 + [\omega+\alpha\sin(\omega \tau_\alpha)]^2},
\]
\[
\tau_\beta^\pm(\omega,l)= \left\{
\begin{array}{lcl}
\frac{1}{\omega}\left(\Arctan[h(\omega)]+2l\pi\right)\;&,&\;\mbox{if}\;\; \pm(1-\alpha\cos(\omega\tau_\alpha)\ge 0\;, \\
\frac{1}{\omega}\left(\Arctan[h(\omega)]+(2l+1)\pi\right)\;&,&\;\mbox{if}\;\; \pm(1-\alpha\cos(\omega\tau_\alpha)< 0\;.
\end{array}
\right.
\]
As can be seen in Figure~\ref{comparefig} (b), the stability region with
one distributed time delay and one discrete delay approaches that of a model with two discrete time delays in the limit
as $m\rightarrow\infty$.
\begin{figure}
\begin{center}
\begin{subfigure}[t]{0.5\textwidth}
\centering
\caption{\hspace{2in}\mbox{ }}
 \includegraphics[scale=.4]{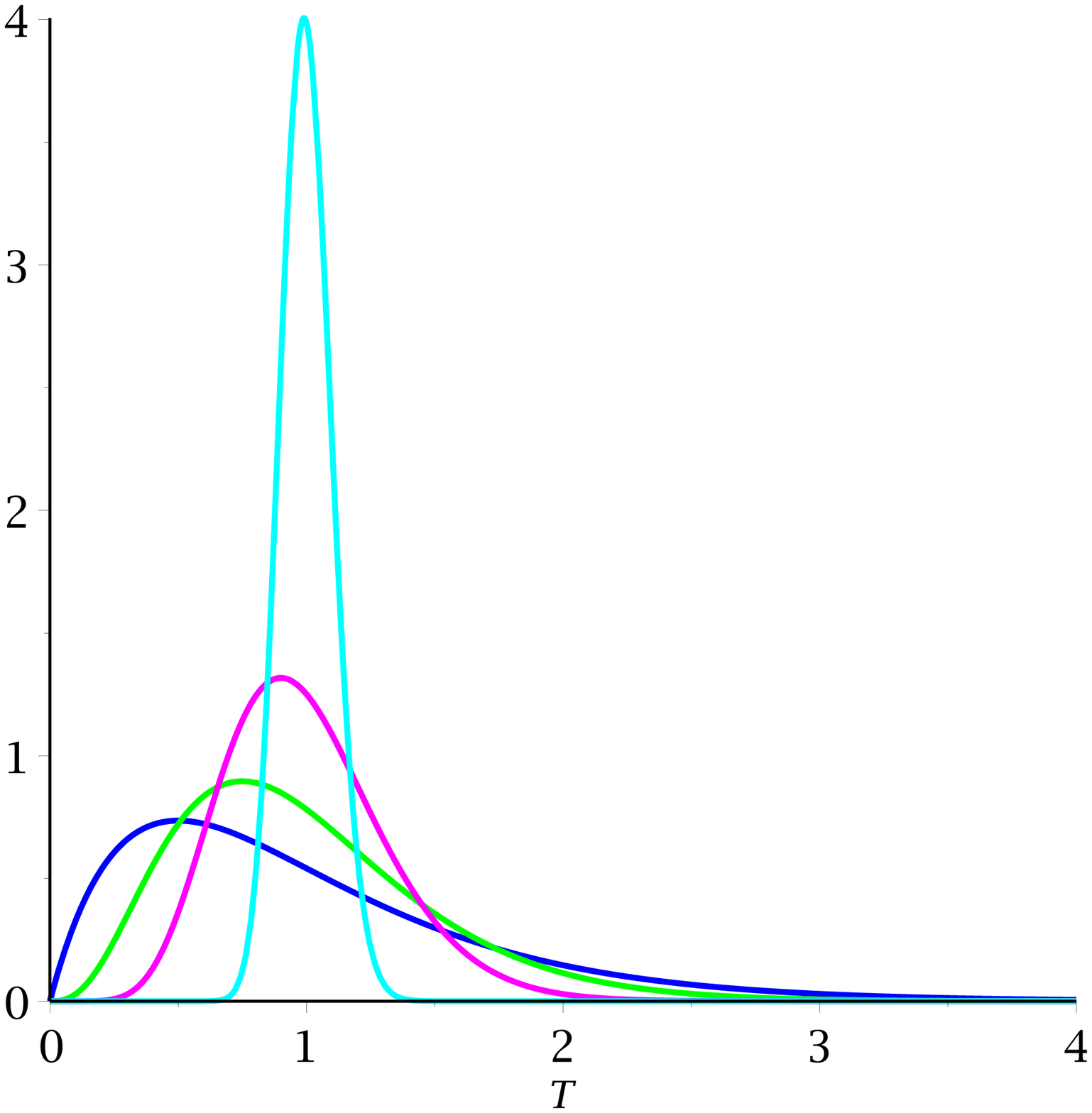}
\end{subfigure}~
\begin{subfigure}[t]{0.5\textwidth}
\centering
\caption{\hspace{2in}\mbox{ }}
 \includegraphics[scale=.4]{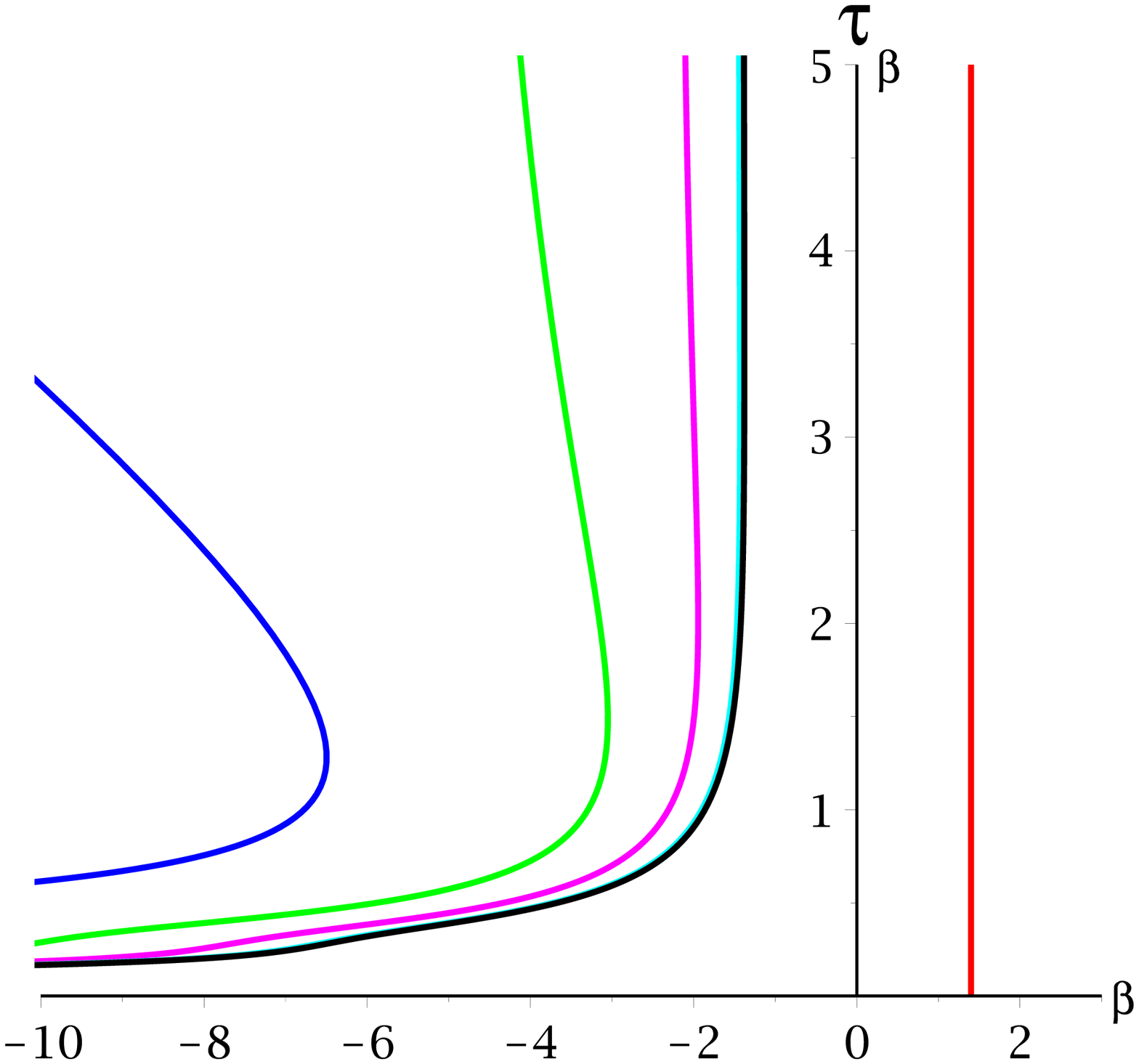}
\end{subfigure}
\end{center}
\caption{(a) Gamma distributions with $\tau_\alpha=1$ and $m=2,4,10,100$.
(b) Comparison of the stability region for the model with
one discrete and one gamma distributed delay
$m=2,4,10,100$ (blue,green,magenta,cyan) and the model with two
discrete delays (black). The other parameters are $\alpha=-0.4,\tau_\alpha=1$.
The red line is the curve $\beta=1-\alpha$.
}
\label{comparefig}
\end{figure}

We also observe the following phenomena associated with a differential
equation with two discrete time delays \cite{Bela_Camp_1,Hale_Huang_1,Yuan_Campbell04}.
For appropriate parameter values, there is {\em stability switching}:
the trivial solution is asymptotically stable for $\tau_\beta$ small, but then destabilises
and re-stabilises a finite number of times as $\tau_\beta$ increases.
This can be seen in Figures~\ref{Otherparamfig} and \ref{comparefig}(b).
For $\tau_\alpha$ large enough, as depicted in Figure~\ref{Otherparamfig}, there can be
intersection points between different curves forming the boundary of the stability region.
These points correspond to parameter values where the characteristic equation has
either two pairs of pure imaginary roots (crossing of two black curves) or a pair
of pure imaginary roots and a zero root (crossing of black curve and red line).
If our characteristic equation were from the linearisation of a nonlinear delay
differential equation about an equilibrium point, such points would correspond to
points of {\em codimension two bifurcation} \cite{Bela_Camp_1}.

There are some differences, of course. In the discrete case, if
 $\tau_\alpha$ is small enough stability switching does not occur.
This can be seen in Figure~\ref{comparefig} (b). The curve which forms the
left boundary of the stability region is monotone increasing (thinking of
$\tau_\beta$ as a function of $\beta$) in the case of a discrete delay,
while the curves for the distributed delay are not monotone.
Further, when stability switching occurs in the discrete case, it ultimately
ends with the trivial solution being unstable \cite{Bela_Camp_1}.  For
the distributed case, our observation is that the stability switching
sometimes ends with the trivial solution being unstable (see
Figures~\ref{m345fig}  and \ref{comparefig}(b)). This is consistent with results of
\cite{Cook_Gross} for a system with one time delay. In general, we observe that the stability
region for the system with one discrete and one distributed time delay is {\em larger} than
that for two discrete time delays if other parameters are kept fixed. This can be seen in
Figure~\ref{comparefig}(b) and is consistent with the ``rule of thumb'' that a system with
a distributed time delay is inherently more stable than the corresponding system with a
discrete time delay. This is shown in many papers including \cite{Atay03a,Atay03b,Cook_Gross,CampbellJessop09,TSE}
and references therein.

\section*{Acknowledgements}
All plots were generated using the symbolic algebra package Maple. This
work has benefitted from the support of the Natural Sciences and Engineering
Research Council of Canada.

\section*{References}

\end{document}